\let\CAL \mathcal
\let\BBM \mathbb
\let\FRK \mathfrak
\newcommand\norm[1]{\left\lVert#1\right\rVert}
\newcommand\abs[1]{\left\lvert#1\right\rvert}
\renewcommand{\Re}{\mathbb{R}}
\newcommand\Na{\mathbb{N}}
\renewcommand{\forall}{\text{ for all }}
\newcommand{\ws}{\overset{w*}{\Longrightarrow}}
\newcommand{\sw}{\overset{s}{\longrightarrow}}
\newcommand{\cp}{\overset{\CAL L_p}{\to}}
\newcommand{\ex}[1]{\mathbb{E}\left[#1\right]}
\newcommand\pr[1]{\BBM{P}\left(#1\right)}
\newcommand\ind[1]{\mathds{1}_{\left\{#1\right\}}}
\newcommand{\inner}[1]{\left\langle{#1}\right\rangle}
\newcommand{\interior}[1]{{\kern0pt#1}^{\mathrm{o}}}
\newcommand{\dom}[1]{\text{dom}\left(#1\right)}
\title{Robustness to Modeling Errors in Risk-Sensitive Markov Decision Problems with Markov Risk Measures}
\author{Shiping Shao \qquad Abhishek Gupta \qquad William B. Haskell}
\begin{document}

\maketitle
\begin{abstract}
We consider risk-sensitive Markov decision processes (MDPs), where the MDP model is influenced by a parameter which takes values in a compact metric space. We identify sufficient conditions under which small perturbations in the model parameters lead to small changes in the optimal value function and optimal policy. We further establish the robustness of the risk-sensitive optimal policies to modeling errors. Implications of the results for data-driven decision-making, decision-making with preference uncertainty, and systems with changing noise distributions are discussed.
\end{abstract}

\section{Introduction}
Risk-sensitive Markov decision processes (MDPs) are an essential paradigm in applications where reliability is a key decision factor.
Risk-sensitivity is often relevant to financial optimization and portfolio planning where the risk is due to extreme market events, and the decision maker (DM) is concerned with more than just expected performance.
Risk-sensitive policies are also frequently deployed in critical infrastructure systems. For example, the electric grid needs to reliably meet random demand in the face of uncertainties due to weather, input prices, and renewable power. Similarly, industrial equipment, vehicles, supply chains, etc. all have to meet functional operating and safety requirements under a wide range of environmental conditions.
In healthcare, planning for patient quality of life outcomes is fundamentally a risk-sensitive problem as well.

To solve any MDP in practice, we need to estimate or construct the model from data.
In addition, in the risk-sensitive paradigm, we need to elicit and input the DM's risk preferences to identify a specific risk-sensitive objective.
In this paper, we capture both of these components via a single key `model parameter' that completely characterizes the risk-sensitive MDP model.
Specifically, it determines the state transition kernel, admissible action set, cost function, and risk-sensitive objective.

There is always fundamentally some modeling error in the choice of this parameter.
First, when estimating the transition kernel and cost function, some statistical uncertainty is introduced.
Second, there is modeling error in the risk-sensitive objective, due to the complexity of dynamic risk models and the difficulty of precisely eliciting the DM's preferences.
Third, the underlying physical system may change and degrade over time due to fatigue and equipment failures, etc. All of these effects may then be expressed as perturbations of the model parameter.

In this paper, we ask the following question: (Q) Under what conditions are the value functions and optimal policies in risk-sensitive MDPs {\it robust to parameter perturbations}? 
Suppose that an optimal risk-sensitive policy has been computed for a nominal parameter value and implemented. While the system is in operation, these parameters can drift leading to a perturbed risk-sensitive MDP. Question (Q) can then be rephrased as: (Q') Under what conditions on the risk-sensitive MDP, do the value functions and optimal policies under the perturbed parameters approximate those for the original nominal one?
Indeed, question (Q') is equivalent to identifying sufficient conditions under which the value functions and optimal policies are continuous in the parameter. Next we present some examples illustrating the practical importance of this question. 

\subsection{Applications with Parametric Uncertainty}
We overview three specific applications that help motivate the problem of sequential optimization under parametric uncertainty, and the issue of sensitivity to parameter perturbations. 

\subsubsection{EV Charging Systems}\label{sec:evch}
Consider a grid aggregator providing EV charging services to the customers in a city. The goal of the aggregator is to maximize operational profits by using renewable energy and scheduling the charging processes. Due to the uncertainties in the renewable energy generation and the charging requests from customers, this problem can be formulated as a risk-sensitive MDP, since the aggregator needs to take the risk of failing to serve demand into account.

These uncertainties are represented as functions or random variables in the system, which are usually parametric. For instance, due to the increase in renewable production over time, the statistics of renewable generation will drift. Due to spreading adoption of EVs, the charging time statistics of the EVs will also drift over time. On the other side, customer demand is affected by the price of charging, traffic, and time, all of which change dynamically. Therefore, in seeking robust strategies for using renewable energy and scheduling the charging processes, it is necessary to consider the impact of changes in these parameters on the risk-sensitive optimal profits and policies.

\subsubsection{Reinforcement Learning}
The DM in reinforcement learning (RL) sequentially evaluates the cost of taking certain actions in certain states. If the system is unknown to the DM, or if it is difficult to formulate an explicit model, then the DM will approximate the system with some parametrized one. For instance, the linear-quadratic-regulator (LQR) problem, which is widely applied in the field of robotics, uses a linear model to approximate the state transition function, and a quadratic model to approximate the cost function. The safe-RL problem is another example, where the DM cannot safely explore the entire state space because some exploration policies may lead to system instability. Thus, safe-RL algorithms only deploy conservative policies, which ensure that the reachable states are within a ``safe set''. Usually, the safe set is represented by a parametric model that can be updated during the training process for policy exploration.

Such RL methods are successful because of the inherent connection between MDPs, RL, and perturbation analysis \cite{cao2003perturbation}. By the robustness property of MDPs, if the approximate model is close enough to the system model, then the DM will arrive at a near optimal policy.

\subsubsection{Preference Uncertainty}
There is an extensive literature on the problem of preference ambiguity in optimization and the difficulty of eliciting the DM's risk preferences. In \cite{armbruster2015decision,delage2018minimizing}, the authors develop robust models for risk-aware optimization where the DM's risk preferences are expressed as an uncertainty set of utility/risk functions. The related stochastic dominance constrained optimization approach is developed in \cite{dentcheva2004optimality}, where the dominance constraints express a requirement for an entire class of risk-sensitive DMs. The problem of preference uncertainty has not yet been studied extensively in the dynamic setting.

\subsection{Related Works}

The theory of risk-sensitive MDP is well-established. Howard and Matheson in \cite{howard1972risk} first incorporated risk sensitivity into an MDP by optimizing the expected exponential utility function of rewards/costs. Jaquette et. al. \cite{jaquette1973markov,jaquette1976utility} investigated MDPs with exponential utility functions and moment optimality, which lexicographically maximize the sequence of signed moments of the total discounted reward. Moreover, Porteus \cite{porteus1975optimality} identified certain conditions where risk-sensitive MDPs can be solved with Bellman equations. 

Other risk criteria have also been applied to the total cost of an MDP, for instance, mean-variance \cite{li2000optimal}, average value-at-risk \cite{bauerle2009dynamic,bauerle2011markov}, target value \cite{wu1999minimizing,boda2004stochastic} that measures the probability of the cost exceeding a target, and general monotone functions \cite{chung1987discounted,bauerle2014more}. In addition, Ruszczyński \cite{ruszczynski2010risk} proposed a dynamic risk measure that sequentially measures the risk of costs in the future with a nested decomposition, and proved that the risk-sensitive MDP can be solved with Bellman equations. We will further introduce the details of these risk-sensitive MDP models in \cref{sec:intro:rs_mdp}. 



Our analysis relies on the theory of continuous parametric MDPs. This theory was first investigated in \cite{maitra1968discounted,furukawa1972markovian} for classical risk-neutral MDPs, and conditions were identified such that the value function is continuous in the state. Stigum \cite{stigum1969competitive} used the continuity of a finite-horizon parametric dynamic programming (DP) problem to prove the existence of a competitive equilibrium in the context of the economy. This result was extended and refined later by Jordan \cite{jordan1977continuity} to establish continuity of the value function with respect to the parameter in an infinite-horizon parametric MDP. Dutta et. al. \cite{dutta1994parametric} also studied continuity of the value function, and relaxed the joint continuity assumption required by \cite{jordan1977continuity} to separate continuity in a parametric MDP with monotone value functions. All of these results are for a risk-neutral DM, whereas in this paper, we study the continuity of the value function and optimal policy for a risk-sensitive DM.

\subsection{Contributions and Outline of this Paper}
Our present work generalizes the continuity results for risk-neutral MDPs in \cite{dutta1994parametric} to risk-sensitive MDPs. Our key contributions are as follows:
\begin{enumerate}
    \item We show that if a parametric risk measure is jointly continuous on its domain and parameter space, then the risk envelop in its biconjugate representation is hemicontinuous with respect to the parameter. This allows us to employ Berge's Maximum Theorem to establish the continuity of the value function of the MDP with Markov risk measures.
    \item We prove that if the cost function, the transition kernel, and the admissible action set are jointly continuous in the state, action, and parameter, then the value function of the risk-sensitive MDP is jointly continuous in the state and the parameter and the optimal policy is lower semicontinuous in the state and parameter. 
    \item We then relax the above joint continuity conditions. We assume separate continuity of the cost function, the transition kernel, and the admissible action set in the state-action pair and the action-parameter pair. We further make some monotonicity assumptions on the MDP, so that the value function is a monotone non-decreasing function of the state. Under these conditions, we establish that the value function remains continuous in the state and parameter. 
    \item Finally, we propose sufficient conditions for the value function to be Lipschitz continuous with respect to the state and parameter. The corresponding Lipschitz coefficients of the value functions are also provided for both infinite and finite-horizon risk-sensitive MDPs. These coefficients explicitly bound the change of the value function in terms of the perturbation in the parameters. We further demonstrate that the policy remains lower semicontinuous in the state and the parameter in this setting. 
\end{enumerate}

This paper is organized as follows: in \cref{sec:problem}, we formulate the risk-sensitive MDP and provide some preliminaries. We also pose our main questions about parametric risk-sensitive MDPs here. In \cref{sec:dutta}, we review the parametric continuity results for risk-neutral MDPs. In \cref{sec:main_res}, we present our main results: the sufficient conditions for the value function of the risk-sensitive MDPs to be continuous (we provide the proofs separately in \cref{sec:proof} for easier readability). In \cref{sec:lip}, we determine sufficient conditions for the value function to be Lipschitz continuous, and identify the Lipschitz coefficients. We then provide some examples to illustrate the joint continuity of the risk measure in \cref{sec:example}. We conclude the paper in \cref{sec:conclusion}.

\subsection{Notation and Definitions}
\subsubsection{Spaces}
Let $(\Omega,\CAL F, \BBM P)$ be a probability space, where $\Omega$ is the set of scenarios, $\CAL F$ is the $\sigma$-algebra of events, and $\BBM P$ is the probability measure on $\CAL F$. We let $\CAL L_p(\Omega,\CAL F, \BBM P)$ denote the space of $p-$integrable random variables, i.e., $\norm{X}_p := \left(\int \abs{X(\omega)}^p\pr{d\omega}\right)^{\frac{1}{p}}<\infty$ for all $X\in\CAL L_p(\Omega,\CAL F,\BBM P)$. For a topological space $\CAL X$, let $\CAL B(\CAL X)$ denote the collection of all Borel measurable subsets, and $\CAL M(\CAL X)$ denote the collection of all probability measures on $\CAL X$.

We let $\bar\Re=\Re\cup\{-\infty,\infty\}$ denote the extended real line. For any function $f:\CAL X\to\bar\Re$, where $\CAL X$ is a normed space, we let $\dom{f}$ denote its domain. We let $\CAL C_b(\CAL X)$ denote the collection of all continuous and bounded functions on $\CAL X$.
Additionally, we let $\CAL C_L(\CAL X)$ denote the collection of all $L-$Lipschitz continuous functions on $\CAL X$ for some $L<\infty$, where $f$ is $L-$Lipschitz if $|f(x) - f(y)| \leq L \|x - y\|$ for all $x, y \in \dom{f}$.

\subsubsection{Ordering}\label{sec:ordering}
When $\CAL X=\Re^d$, we endow it with the usual component-wise order: if $x,x'\in\CAL X$ with $x\leq x'$, then $x_i'\leq x_i'$ for all $i=1,\ldots, d$. For any $X\in\CAL X$, let $X_+:=\max\{0,X\}$. 

Given a random variable $X:\Omega\to\CAL X$, we say $X\sim \mu$ for a probability distribution $\mu:\CAL B(\CAL X)\to[0,1]$ if $\mu(B)=\pr{X\in B}$ for all $B\in\CAL B(\CAL X)$. The cumulative distribution function (CDF) of $X$ is denoted by $F_X(x):=\pr{X\leq x}$ for all $x \in \CAL X$. 

We endow the space of random variables on $\CAL X$ with the {\it first stochastic order} $\preceq$.
For two random variables $X,X':\Omega\to\CAL X$ with $X\sim \mu$ and $X'\sim\mu'$ for $\mu,\mu'\in\CAL M(\CAL X)$, we have $X\preceq X'$ if
\begin{align*}
    \pr{X\geq x}\leq \pr{X'\geq x},\forall x\in\CAL X.
\end{align*}
In this case, we write $\mu\preceq\mu'$.

\subsubsection{Convergence}
Let $\CAL X=\CAL L_p(\Omega,\CAL F,\BBM P)$ for $p\in(1,\infty)$ and $\CAL X^*=\CAL L_p^*(\Omega,\CAL F,\BBM P)$ be the dual space of $\CAL X$. In this paper, the space $\CAL X^*$ is endowed with the weak* topology: We say a sequence of functions $\{g_n\}_{n\in\Na}\subset\CAL X^*$ converges in the weak* sense to $g\in\CAL X^*$, denoted by $g_n\ws g$, if for all  $f\in\CAL X$,
\begin{align*}
    \int f(\omega) g_n(\omega)\pr{d\omega}\to \int f(\omega) g(\omega)\pr{d\omega},\; \text{ as }n\to\infty.
\end{align*}

A sequence of probability measures $\{\mu_n\}_{n\in\Na}\subset\CAL M(\Omega)$ converges in the weak* sense to $\mu\in\CAL M(\Omega)$, denoted by $\mu_n\ws \mu$, if for all $f\in\CAL C_b(\Omega)$,
\begin{align*}
    \int f(\omega) \mu_n(d\omega) \to \int f(\omega) \mu(d\omega),\; \text{ as }n\to\infty.
\end{align*}
We say that $\{\mu_n\}_{n\in\Na}\subset\CAL M(\Omega)$ converges to $\mu\in\CAL M(\Omega)$ setwise, denoted by $\mu_n\sw \mu$, if the above convergence holds for all measurable and bounded functions $f\in\CAL L_\infty(\Omega,\CAL F,\BBM P)$.

Let $\CAL Y$ be a metric space. A transition kernel $q:\CAL B(\CAL X)\times\CAL Y\to[0,1]$ is weakly continuous if $q(\cdot,y_n)\ws q(\cdot,y)$ for all sequences $\{y_n\}_{n\in\Na}$ with $y_n\to y$. Further, we say $q$ is setwise continuous if for all measurable and bounded functions $f : \CAL X \rightarrow \Re$,
\begin{align*}
    \int f(x) q(dx,y_n)\to \int f(x) q(dx,y), \text{ as }n\to\infty.
\end{align*}
In this case, we denote $q(\cdot,y_n)\sw q(\cdot,y)$.





A sequence of random variables $\{X_n\}_{n\in\Na}$ converges to $X$ in $\CAL L_p$, denoted by $X_n\cp X$, if $\norm{X_n-X}_p \to 0$.

\subsubsection{Correspondences}
Let $\CAL A$ be a metric space and $\CAL B$ be a Hausdorff topological space. A correspondence $\Psi:\CAL A \rightrightarrows \CAL B$ is a set-valued map such that $\Psi(a)\subset\CAL B$ for all $a\in\CAL A$. A correspondence is closed-valued (or compact-valued) if $\Psi(a)$ is closed (or compact) in $\CAL B$ for every $a\in\CAL A$.

We next recall the definition of upper/lower hemicontinuity of $\Psi$ from \cite{Aliprantis2006}. 
A closed-valued correspondence $\Psi:\CAL A\rightrightarrows \CAL B$ is upper hemicontinuous at $a\in\CAL A$ if and only if for any sequence $\{a_n\}_{n\in\Na}\subset \dom{\Psi}$, and any sequence $\{b_n\}_{n\in\Na}$ with $b_n\in\Psi(a_n)$, we have that $a_n\to a\in\CAL A$ and $b_n\to b\in\CAL B$ implies $b\in\Psi(a)$. A correspondence $\Psi:\CAL A\rightrightarrows \CAL B$ is lower hemicontinuous at $a\in\CAL A$ if and only if for any $b\in\Psi(a)$ and for any sequence $\{a_n\}_{n\in\Na}\subset \dom{\Psi}$ with $a_n\to a$, there exists a sequence $\{b_n\}_{n\in\Na}$ and $b_n\in\Psi(a_n)$ for all $n\in\Na$ such that $b_n\to b$. A correspondence is continuous if it is both upper and lower hemicontinuous at all points $a\in\CAL A$. 


\section{Problem Formulation}\label{sec:problem}

In this section, we define {\it parametric} MDPs, where all of the model information is expressed by a model parameter.
The model parameter, denoted $\theta\in\Theta$ where $\Theta$ is a compact metric space, describes the cost function, transition kernel, admissible action set, and the DM's risk preferences.



The underlying probability space is $(\Omega,\CAL F, \BBM P)$, the state space is $\CAL S$, and the action space is $\CAL A$. The state and action spaces are assumed to be Borel subsets of {\it Euclidean spaces}. The MDP can be either finite-horizon with time index $\{1,\ldots,T\}$ for $T<\infty$ or infinite-horizon. The initial state $S_0=s_0$ is fixed. We have a filtration $\CAL F_0\subset\CAL F_1\subset \cdots \subset \CAL F$, where $\CAL F_t$ is the $\sigma$-algebra generated by the random variables $\{S_0,A_0,\ldots,S_t\}$ (where $S_t$ and $A_t$ are the random state and action at time $t$). 

We write the dynamics of the system at time $t$ as a Borel measurable function $q_t:\CAL B(\CAL S)\times\CAL S\times\CAL A\times\Theta\to [0,1]$. That is, under parameter $\theta$ and state-action pair $(s_t,a_t)\in\CAL S\times\CAL A$, $q_t(\cdot|s_t,a_t,\theta)$ is a probability measure on $\CAL S$, i.e.,
\begin{align}\label{eq:q}
    \pr{S_{t+1}\in B\middle |s_t,a_t,\theta}=q_t(B|s_t,a_t,\theta),\, \forall B\in\CAL B(\CAL S).
\end{align}
We denote this measure succinctly as $q_t(s_t,a_t,\theta)$, so \cref{eq:q} yields $S_{t+1}\sim q_t(s_t,a_t,\theta)$. The cost function at time $t$ is $c_t:\CAL S\times\CAL A\times\Theta\to[0,\infty)$, and $c_T:\CAL S\times\Theta\to[0,\infty)$ is the terminal cost function (which does not depend on the action) for the finite-horizon case. 

An MDP is said to be stationary if $q_t\equiv q$ and $c_t\equiv c$ for all time $t\in\Na$. If an MDP is infinite-horizon, we assume that it is stationary.
Let $\gamma:\Theta\to[0,\bar\gamma]$ be the discount factor of future costs in the infinite-horizon MDP, where we assume that $\bar\gamma < 1$.

The set of admissible actions in state $s\in\CAL S$ with parameter $\theta$ is given by a correspondence $\Gamma:\CAL S\times\Theta\to \CAL B(\CAL A)$. We let $\CAL D(\theta):=\{(s,a)\in\CAL S\times\CAL A:a\in\Gamma(s,\theta)\}$ denote the set of all feasible state-action pairs for parameter $\theta$.
For each time $t$, the DM picks a map $\pi_t:\CAL S\times\Theta\to \CAL A$ with $\pi_t(s,\theta)\in\Gamma(s,\theta)$ for all $(s, \theta) \in \CAL S \times \Theta$. Then $\pi:=(\pi_0,\pi_1,\ldots)\in\Pi$ denotes a policy for the MDP, where $\Pi$ is the space of all feasible policies. A policy is said to be stationary if $\pi_t = \pi_{t'}$ for all $t\neq t'\in\Na$. Under the parameter $\theta\in\Theta$, the DM selects a policy $\pi\in\Pi$ and faces the sequence of costs:
\begin{align*}
    Z_T(s_0,\theta;\pi)& := (c_0(s_0,\pi_0(s_0,\theta),\theta),\ldots,c_T(s_T,\theta)) \text{ for a finite-horizon};\\
    Z_\infty(s_0,\theta;\pi)& := (c_0(s_0,\pi_0(s_0,\theta),\theta),c_1(s_1,\pi_1(s_1,\theta),\theta),\ldots) \text{ for an infinite-horizon},
\end{align*}
where we suppress the dependence on the underlying scenario for simplicity (i.e., $Z_T(\cdot|s_0,\theta;\pi) : \Omega\to\Re^{T+1}$ and $Z_\infty(\cdot|s_0,\theta;\pi) : \Omega\to\Re^{\infty}$ are mappings from the underlying probability space to cost sequences).

\subsection{Risk-Neutral Problem}

 The risk-neutral finite-horizon performance criteria is the expected total cost
$$
J_{\texttt{RN}, T}(s_0,\theta;\pi) := \ex{\sum_{t=0}^{T-1} c_t(S_t,\pi_t(S_t,\theta),\theta) + c_T(S_T,\theta)},
$$
and the risk-neutral finite-horizon MDP is
$$
\FRK P_{\texttt{RN}, T} : \quad \min_{\pi \in \Pi} J_{\texttt{RN}, T}(s_0,\theta;\pi),
$$
with optimal policy $\pi_{\texttt{RN},T}^*=\arg\min_{\pi\in\Pi} J_{\texttt{RN},T}(s_0,\theta;\pi)$.
The (stationary) infinite-horizon performance criteria is the expected discounted total cost
$$
J_{\texttt{RN}, \infty}(s_0,\theta;\pi) := \ex{\sum_{t=0}^\infty \gamma(\theta)^t c(S_t,\pi(S_t,\theta),\theta)}
$$
and the risk-neutral infinite-horizon MDP is
$$
\FRK P_{\texttt{RN}, \infty} : \quad \min_{\pi \in \Pi} J_{\texttt{RN}, \infty}(s_0,\theta;\pi),
$$
with optimal policy $\pi_{\texttt{RN},\infty}^*=\arg\min_{\pi\in\Pi} J_{\texttt{RN},\infty} (s_0,\theta;\pi)$.

\subsection{Risk-Sensitive Problem}\label{sec:formu:seq}
\label{sec:intro:rs_mdp}

We now consider risk-sensitive MDPs.
To begin, we formalize the notion of a risk measure.
Let $\CAL X = \mathcal L_p(\Omega,\CAL F,\mathbb{P})$ for $1 < p < \infty$ be an admissible space of random variables. We have a risk measure $\rho_\theta:\CAL X\to\bar\Re$ for each value of the parameter $\theta$. A risk measure $\rho_\theta$ is {\it coherent} if it satisfies the following conditions, which were first introduced in \cite{artzner1999coherent}.


\begin{definition}[Coherent Risk Measures]\label{def:coherent}
A risk measure $\rho_\theta$ is coherent if it satisfies:
\begin{enumerate}[label=(\roman*)]
    \item {\bf Monotonicity: }If $X,X'\in\CAL X$ and $X(\omega)\leq  X'(\omega)$ for all $\omega\in\Omega$, then $\rho_\theta(X)\leq\rho_\theta(X')$.
    \item {\bf Convexity: } If $X,X'\in\CAL X$ and $\alpha\in[0,1]$, then $\rho_\theta(\alpha X + (1-\alpha)X')\leq \alpha\rho_\theta(X)+(1-\alpha)\rho_\theta(X')$.
    \item {\bf Translation equivalence: }If $\alpha\in\Re$ and $X\in\CAL X$, then $\rho_\theta(X+\alpha)=\rho_\theta+\alpha$.
    \item {\bf Positive homogeneity: }If $a>0$ and $X\in\CAL X$, then $\rho_\theta(\alpha  X)=\alpha\rho_\theta(X)$.
\end{enumerate}
\end{definition}

Let $\CAL L_t:=\CAL L_p(\Omega,\CAL F_t,\BBM P)$ for all $t\in\Na$, and let $\{\rho_{\theta,t}\}_{t\in\Na}$ be a sequence of {\it one-step conditional risk measures} \cite{ruszczynski2010risk} where each $\rho_{\theta,t}:\CAL L_{t+1}\to\CAL L_t$.
We also suppose all $\{\rho_{\theta,t}\}_{t\in\Na}$ are coherent as in \cref{def:coherent}.
For the finite-horizon case, the risk-sensitive objective is:
\begin{align}
    J_T(s_0,\theta;\pi) := & c_0(s_0,\pi_0(s_0,\theta),\theta) + \rho_{\theta,1}\Big(c_1(S_1,\pi_1(S_1,\theta),\theta)+ \cdots \label{eq:f_ra_X}\\
    &\; + \rho_{\theta,T-1}\Big(c_{T-1}(S_{T-1},\pi_{T-1}(S_{T-1},\theta),\theta)  + \rho_{\theta,T}(c_T(S_T,\theta))\Big) \cdots \Big). \nonumber
\end{align}
This objective is a risk measure on finite sequences $\varrho_\theta:\prod_{t=0}^T \CAL L_t\to\Re$ constructed by composing the one-step risk measures.
The corresponding risk-sensitive MDP is:
$$
\FRK P_T:\quad \min_{\pi \in\Pi} J_T(s_0,\theta;\pi),
$$
with optimal policy $\pi^*=\arg\min_{\pi\in\Pi} J_T(s_0,\theta;\pi)$. Let $\{v_t\}_{t=0}^T$, where $v_t:\CAL S\times\Theta\to\Re$, be the value functions for $\FRK P_T$. Similarly, for the infinite-horizon case, the risk-sensitive objective is:
\begin{align}
    J_\infty(s_0,\theta;\pi) := & c_0(s_0,\pi(s_0,\theta),\theta) + \rho_{\theta,1}\Big(\gamma(\theta) c(S_1,\pi(S_1,\theta),\theta) \label{eq:i_ra_X}\\
    &\qquad +\rho_{\theta,2}\Big(\gamma(\theta)^2 c(S_2,\pi(S_2,\theta),\theta)+\cdots \Big)\Big) \nonumber.
\end{align}
This objective is a risk measure on infinite sequences $\varrho_\theta:\prod_{t=0}^\infty \CAL L_t\to\Re$, which is well-defined by \cite[Theorem 3]{ruszczynski2010risk} under mild assumptions. The corresponding risk-sensitive MDP is:
$$
\FRK P_\infty:\quad \min_{\pi\in\Pi} J_\infty(s_0,\theta;\pi),
$$
with optimal stationary policy denoted by $\pi_\infty^*=\arg\min_{\pi\in\Pi} J_\infty(s_0,\theta;\pi)$. Let $v:\CAL S \times \Theta \to\Re$ be the value function for $\FRK P_\infty$ (in the stationary case).

\subsection{Perturbation of Risk-Sensitive MDPs}

The goal of the risk-sensitive MDP is to obtain the optimal value function and optimal policy ($\{v_t\}_{t=0}^T$ and $\pi^*$ for $\FRK P_T$ or $v$ and $\pi_\infty^*$ for $\FRK P_\infty$). The main objective of this paper is to establish the continuity properties of the value function and policy as a function of $\theta$. In particular, suppose $\{\theta_n\}_{n\in\Na}$ converges to $\theta$, then we ask under what conditions: 
\begin{enumerate}
    \item[Q1.] Does $\min_{\pi\in\Pi}J(s_0,\theta_n;\pi)$ converge to $\min_{\pi\in\Pi}J(s_0,\theta;\pi)$ as $n\to\infty$?
    \item[Q2.] Does the optimal policy $\pi^*(s_0,\theta_n)$ converge to $\pi^*(s_0,\theta)$ as $n\to\infty$?
\end{enumerate}

As stated in the Introduction, the above questions are frequently encountered in market design, control of safety-critical systems, and distributional reinforcement learning. In particular, if the answer to Q1 is affirmative, then the value function of the risk-sensitive MDP under the nominal parameter $\theta$ is ``close'' to the value function under the perturbed parameter $\theta'$, when $\theta$ and $\theta'$ are close to each other. In addition, if the answer to Q2 is affirmative, then the respective optimal policies are also close. Thus, the DM can ignore minor perturbations of the model parameter and not recompute the value functions and optimal policies every time the parameter drifts. Indeed, in practice initial control policies are often designed at the time of manufacturing/installation but then not tuned for the rest of the system lifetime, even though the system components degrade and the underlying distributions of the operating conditions change over the lifetime.




\section{Results for the Risk-Neutral Case}\label{sec:dutta}

We briefly review the existing continuity results for $\FRK P_{\text{RN}, T}$ and $\FRK P_{\text{RN}, \infty}$ from \cite{jordan1977continuity,dutta1994parametric}.
Our goal is to derive analogous results for the risk-sensitive MDPs $\FRK P_T$ and $\FRK P_\infty$. Under mild assumptions on the risk-neutral MDP, the value functions $\{v_t(\cdot,\theta)\}_{t=0}^T$ and $v(\cdot,\theta)$ exist for all $\theta\in\Theta$; see, for example, \cite{hernandez2012discrete,stokey1989recursive,hinderer2005lipschitz}. We now recall conditions under which the value functions of an MDP are continuous, see \cite{dutta1994parametric}.

\begin{assumption}[Jointly Continuous MDP]\label{asm:cts_v_neu}
For all $t\in\Na$:
\begin{enumerate}[label=(\roman*)]
    \item $q_t$ is weak* continuous on $\CAL S\times\CAL A\times\Theta$\label{asm:cts_v_neu:q}.
    \item $c_t$ is continuous on $\CAL S\times\CAL A\times\Theta$ and bounded.\label{asm:cts_v_neu:c}
    \item $\Gamma: \CAL S\times\Theta\rightrightarrows \CAL A$ is continuous and is a compact-valued correspondence.\label{asm:cts_v_neu:A}
    \item $\gamma:\Theta\to(0,\bar\gamma]$ is continuous and $\bar\gamma<1$. \label{asm:cts_v_neu:gamma}
\end{enumerate}
\end{assumption}
\cref{asm:cts_v_neu} requires joint continuity with respect to the state, action, and parameter for all system components: cost functions, transition kernels, admissible action sets, and discount factor.

\begin{theorem}[\cite{dutta1994parametric}, Theorem 1]\label{thm:dutta}
Suppose \cref{asm:cts_v_neu} holds.

(i) The value functions $\{v_t\}_{t=0}^T$ are continuous on $\CAL S\times\Theta$, and $\pi_{\texttt{RN}, T}^*(\cdot,\cdot)$ is lower semi-continuous on $\CAL S\times\Theta$.

(ii) The value function $v$ is continuous on $\CAL S\times\Theta$, and $\pi_{\texttt{RN}, \infty}^*(\cdot,\cdot)$ is lower semi-continuous on $\CAL S\times\Theta$.
\end{theorem}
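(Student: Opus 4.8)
The plan is to drive both parts through the Bellman recursion and Berge's Maximum Theorem. For the finite-horizon problem $\FRK P_{\texttt{RN},T}$ I would induct backwards on $t$, starting from the terminal value $v_T(s,\theta)=c_T(s,\theta)$, which is continuous by the cost assumption in \cref{asm:cts_v_neu}. For the infinite-horizon problem $\FRK P_{\texttt{RN},\infty}$ I would instead exhibit $v$ as the unique fixed point of the Bellman operator on the complete space $\CAL C_b(\CAL S\times\Theta)$ and transport continuity through the contraction. In both cases the central object is the expected cost-to-go, and the crux is to show that it is \emph{jointly} continuous in $(s,a,\theta)$, after which Berge's theorem does the rest.

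For the inductive step, assume $v_{t+1}\in\CAL C_b(\CAL S\times\Theta)$ (boundedness is inherited from the bounded costs). I would prove that
\[
    w_t(s,a,\theta) := c_t(s,a,\theta) + \int_{\CAL S} v_{t+1}(s',\theta)\, q_t(ds'\mid s,a,\theta)
\]
is continuous on $\CAL S\times\CAL A\times\Theta$. The cost term is continuous by hypothesis; the difficulty is the integral, because along a sequence $(s_n,a_n,\theta_n)\to(s,a,\theta)$ \emph{both} the measure $q_t(\cdot\mid s_n,a_n,\theta_n)$ and the integrand $v_{t+1}(\cdot,\theta_n)$ vary with $n$. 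I would split the difference into a term tested against the \emph{fixed} integrand $v_{t+1}(\cdot,\theta)$, handled directly by the weak* continuity of $q_t$ in \cref{asm:cts_v_neu}, plus a remainder $\int [v_{t+1}(s',\theta_n)-v_{t+1}(s',\theta)]\,q_t(ds'\mid s_n,a_n,\theta_n)$. Controlling this remainder is the main obstacle: it requires a generalized convergence-of-integrals result valid when the integrand converges \emph{continuously} and is uniformly bounded while the measures converge weakly (the measures live on a Borel subset of Euclidean space, which supplies the tightness needed to keep the mass from escaping to regions where the integrand gap is large). Granting joint continuity of $w_t$, together with the continuity and compact-valuedness of $\Gamma$ in \cref{asm:cts_v_neu}, Berge's Maximum Theorem yields that $v_t(s,\theta)=\min_{a\in\Gamma(s,\theta)} w_t(s,a,\theta)$ is continuous, closing the induction.

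For part (ii), I would define the Bellman operator $(\CAL T v)(s,\theta) := \min_{a\in\Gamma(s,\theta)}\big[ c(s,a,\theta)+\gamma(\theta)\int_{\CAL S} v(s',\theta)\, q(ds'\mid s,a,\theta)\big]$. The continuity argument above, now also using continuity of $\gamma$ from \cref{asm:cts_v_neu}, shows $\CAL T$ maps $\CAL C_b(\CAL S\times\Theta)$ into itself; boundedness of the costs together with $\bar\gamma<1$ makes $\CAL T$ a $\bar\gamma$-contraction in the supremum norm. Since $\CAL C_b(\CAL S\times\Theta)$ is complete, the Banach fixed point theorem delivers a unique fixed point $v\in\CAL C_b(\CAL S\times\Theta)$, which is the continuous value function.

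Finally, in either horizon Berge's Maximum Theorem also gives that the minimizer correspondence $(s,\theta)\mapsto \arg\min_{a\in\Gamma(s,\theta)} w_t(s,a,\theta)$ is nonempty, compact-valued, and upper hemicontinuous. Selecting a distinguished minimizer (for instance the lexicographically smallest one) then produces a lower semicontinuous optimal policy $\pi^*_{\texttt{RN},T}$ (respectively $\pi^*_{\texttt{RN},\infty}$), as claimed. I expect the convergence of $\int v_{t+1}(\cdot,\theta_n)\,q_t(\cdot\mid s_n,a_n,\theta_n)$ under simultaneously varying integrand and measure to be the only genuinely delicate point; the remaining steps are routine applications of Berge's theorem and the contraction mapping principle.
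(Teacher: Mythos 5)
Your proposal is correct and follows essentially the same route as the source: this theorem is imported from \cite{dutta1994parametric}, whose argument (and the paper's own risk-sensitive generalization in \cref{pf:mdp_cts}) proceeds exactly as you do, via backward induction for the finite horizon, a $\bar\gamma$-contraction on the complete space $\CAL C_b(\CAL S\times\Theta)$ for the infinite horizon, and Berge's Maximum Theorem for continuity of the minimum and semicontinuity of the selected minimizer. Your identification of the varying-integrand/varying-measure convergence as the delicate step is also on target; the paper handles the analogous step in \cref{lem:sigma_cts} by invoking \cite[Theorem 5.5]{billingsley1968convergence}, which is precisely the tightness-plus-continuous-convergence argument you sketch.
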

\cite[Theorem 1]{dutta1994parametric} does not establish continuity for the finite-horizon case. However, one can readily adopt the proof technique of \cite[Theorem 1]{dutta1994parametric} to arrive at the continuity result for finite-horizon MDPs by essentially the same argument.

Next we identify regularity assumptions for the class of monotone MDPs. Recall $\CAL S$ is equipped with the element-wise order (see \cref{sec:ordering}).

\begin{assumption}[Monotone MDP]\label{asm:mono}
For every $t\in\Na$ and every $s, s' \in \CAL S$ such that $s\leq s'$, we have 
\begin{enumerate}[label=(\roman*)]
    \item $q_t(s, a, \theta) \preceq q_t(s', a, \theta)$ for all $(a,\theta)\in\CAL A\times\Theta$.\label{asm:mono:q}
    \item $c_t(s, a,\theta) \leq c_t(s', a,\theta)$ for all $(a,\theta)\in\CAL A\times\Theta$.\label{asm:mono:c}
    \item $\Gamma(s,\theta) \supseteq \Gamma(s',\theta)$ for all $\theta\in\Theta$.\label{asm:mono:A}
\end{enumerate}
\end{assumption}
\noindent
Under \cref{asm:mono}, \cite[Chapter 9]{stokey1989recursive} shows that the value function of a risk-neutral MDP is monotonically increasing  (a simpler proof is presented in \cite[Theorem 5]{li2021fitted}). In the following, we appeal to weaker separate continuity assumptions for monotone MDPs compared to \cref{asm:cts_v_neu}.

\begin{assumption}[Separately Continuous MDP]\label{asm:scts_v_neu}
For every $t\in\Na$:
\begin{enumerate}[label=(\roman*)]
    \item $q_t(\cdot,\cdot,\theta)$ is weak* continuous on $\CAL S\times\CAL A$ for every $\theta\in\Theta$, and $q_t(s,\cdot,\cdot)$ is weak* continuous on $\CAL A\times\Theta$ for every $s\in\CAL S$.\label{asm:scts_v_neu:q}
    \item $c_t(\cdot,\cdot,\theta)$ is continuous on $\CAL S\times\CAL A$ for every $\theta\in\Theta$, and $c_t(s,\cdot,\cdot)$ is continuous on $\CAL A\times\Theta$ for every $s\in\CAL S$.\label{asm:scts_v_neu:c}
    \item $\Gamma(\cdot,\theta)$ is continuous on $\CAL S$ for every $\theta\in\Theta$, and $\Gamma(s,\cdot)$ is continuous on $\Theta$ for every $s\in\CAL S$.\label{asm:scts_v_neu:A}
    \item $\gamma:\Theta\to(0,\bar\gamma]$ is continuous and $\bar\gamma<1$. \label{asm:scts_v_neu:gamma}
\end{enumerate}
\end{assumption}

\begin{theorem}[\cite{dutta1994parametric}, Theorem 3]\label{thm:dutta:mono}
Suppose \cref{asm:mono} and \cref{asm:scts_v_neu} hold. Then:

(i) $\min_{\pi\in\Pi}J_{\texttt{RN}, \infty}(\cdot,\cdot;\pi)$ is continuous on $\CAL S\times\Theta$, and $\pi_{\texttt{RN}, \infty}^*(\cdot,\cdot)$ is lower semi-continuous on $\CAL S\times\Theta$.

(ii) $\min_{\pi\in\Pi}J_{\texttt{RN}, T}(\cdot,\cdot;\pi)$ is continuous on $\CAL S\times\Theta$, and $\pi_{\texttt{RN}, T}^*(\cdot,\cdot)$ is lower semi-continuous on $\CAL S\times\Theta$.
\end{theorem}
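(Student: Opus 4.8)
The plan is to obtain both parts from the dynamic programming characterization of the value functions, propagating three structural properties through the Bellman recursion: monotonicity in the state, continuity in the state for each fixed parameter, and continuity in the parameter for each fixed state. For the finite-horizon problem I would argue by backward induction on $t$, starting from the terminal cost $c_T$, and for the infinite-horizon problem I would run value iteration $v^{(k+1)}(s,\theta) = \min_{a\in\Gamma(s,\theta)}\big[c(s,a,\theta)+\gamma(\theta)\int v^{(k)}(s',\theta)\,q(ds'|s,a,\theta)\big]$ from $v^{(0)}\equiv 0$. Since \cref{asm:scts_v_neu}\ref{asm:scts_v_neu:gamma} gives $\bar\gamma<1$, the Bellman operator is a sup-norm contraction (using boundedness of the costs) and the iterates converge uniformly to $v$; a uniform limit of jointly continuous functions is jointly continuous and preserves monotonicity, so it suffices to show each iterate is jointly continuous and nondecreasing in $s$.

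The crux is a lemma converting separate continuity into joint continuity using the order structure of $\CAL S\subseteq\Re^d$: if $f(s,\theta)$ is nondecreasing in $s$, continuous in $s$ for each $\theta$, and continuous in $\theta$ for each $s$, then $f$ is jointly continuous. I would prove this by a sandwich argument — given $(s_n,\theta_n)\to(s,\theta)$ and $\varepsilon>0$, pick $s^-\le s\le s^+$ in $\CAL S$ with $f(s^\pm,\theta)$ within $\varepsilon$ of $f(s,\theta)$ by continuity in $s$; for large $n$, $s^-\le s_n\le s^+$ componentwise, so monotonicity gives $f(s^-,\theta_n)\le f(s_n,\theta_n)\le f(s^+,\theta_n)$, and continuity in $\theta$ sends the outer terms to $f(s^\pm,\theta)$, squeezing the limit. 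The same argument, carried out with an extra action coordinate held in the continuous (non-monotone) group, upgrades the one-step objective $g^{(k)}(s,a,\theta):=c(s,a,\theta)+\gamma(\theta)\int v^{(k)}(s',\theta)q(ds'|s,a,\theta)$ from joint continuity on $(s,a)$-sections and on $(a,\theta)$-sections to full joint continuity on $\CAL S\times\CAL A\times\Theta$, since $g^{(k)}$ inherits monotonicity in $s$ from \cref{asm:mono}.

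With the lemma in hand, the inductive step runs as follows. Assuming $v^{(k)}$ is jointly continuous and nondecreasing in $s$, \cref{asm:mono} makes $g^{(k)}$ nondecreasing in $s$ (using that $q_t(s,a,\theta)\preceq q_t(s',a,\theta)$ preserves expectations of nondecreasing functions); \cref{asm:scts_v_neu}\ref{asm:scts_v_neu:q}--\ref{asm:scts_v_neu:c} give continuity of $g^{(k)}$ on $(s,a)$-sections and on $(a,\theta)$-sections; so the lemma yields joint continuity of $g^{(k)}$. Then Berge's Maximum Theorem \cite{Aliprantis2006}, applied with decision variable $a$, continuous compact-valued constraint $\Gamma$ (\cref{asm:scts_v_neu}\ref{asm:scts_v_neu:A}), and jointly continuous $g^{(k)}$, makes $v^{(k+1)}=\min_{a\in\Gamma}g^{(k)}$ jointly continuous; monotonicity of $v^{(k+1)}$ in $s$ follows because a smaller state enlarges $\Gamma(s,\theta)$ and lowers $g^{(k)}$. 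Passing to the uniform limit (infinite horizon) or to $t=0$ (finite horizon) gives continuity of the value functions, and the same Berge argument shows the optimal-action correspondence is upper hemicontinuous, from which the stated lower semicontinuity of $\pi^*$ follows via a measurable selection as in \cite{dutta1994parametric}.

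The main obstacle is establishing continuity of $g^{(k)}$ on the $(a,\theta)$-sections, i.e. evaluating $\lim_n\int v^{(k)}(s',\theta_n)\,q(ds'|s,a_n,\theta_n)$ when both the integrand and the transition kernel vary with $n$. Splitting off $\int[v^{(k)}(s',\theta_n)-v^{(k)}(s',\theta)]\,q(ds'|s,a_n,\theta_n)$, the remaining term converges by weak* continuity of $q$ in $(a,\theta)$ against the fixed bounded continuous integrand $v^{(k)}(\cdot,\theta)$. The split-off term is where non-compactness of $\CAL S$ bites: pointwise convergence of the integrands does not immediately survive integration against moving measures. I expect to close this by noting that the joint continuity of $v^{(k)}$ furnishes \emph{continuous convergence} $v^{(k)}(\cdot,\theta_n)\to v^{(k)}(\cdot,\theta)$, which together with weak* convergence of the kernels and a uniform bound (from boundedness of the costs and $\bar\gamma<1$) yields convergence of the integrals by a generalized dominated convergence theorem for varying measures; if $\CAL S$ is compact this reduces to uniform convergence of the sections.
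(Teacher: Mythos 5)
Your proposal is correct and takes essentially the same route as the paper's: the paper defers this theorem to \cite[Theorem 3]{dutta1994parametric} and mirrors its structure in the proof of the risk-sensitive analogue (\cref{thm:mono_cts}), where your key sandwich lemma (monotonicity in $s$ plus separate continuity implies joint continuity) is exactly the invoked \cite[Lemma 2]{dutta1994parametric}, combined, as in your argument, with stochastic-order preservation of monotonicity, a Billingsley-type convergence result for integrals against weakly converging measures with continuously converging integrands, Berge's Maximum Theorem, a sup-norm contraction on monotone bounded continuous functions for the infinite horizon, and backward induction for the finite horizon. The one step you gloss over---which the paper glosses over in the same way---is that \cref{asm:scts_v_neu} \ref{asm:scts_v_neu:A} gives only \emph{separate} continuity of the correspondence $\Gamma$, so Berge's theorem requires an analogous monotonicity-based (via \cref{asm:mono} \ref{asm:mono:A}) upgrade to joint continuity of $\Gamma$ before it applies.
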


\section{Main Results}
\label{sec:main_res}
Although the desired parametric continuity results have been established for risk-neutral MDPs in \cite{dutta1994parametric}, the risk-sensitive extension remains challenging. In the risk-neutral case, continuity of the value function follows directly from weak continuity of the transition kernel (since it is based on an expectation). In the risk-sensitive case, $\rho_\theta$ can be any coherent risk measure, and these have widely differing forms.

We first need a universal representation of coherent risk measures to discuss their continuity properties. The Fenchel-Moreau Theorem establishes that every {\it law invariant}\footnote{$\rho_\theta$ is law invariant if two random variables $X,X'\in\CAL X$, $\rho_\theta(X)=\rho_\theta(X')$ if $F_{X}(u)=F_{X'}(u)$ for all $u\in\Re$. A detailed discussion of law invariant risk measures is given in \cite{shapiro2013kusuoka}}, {\it proper}\footnote{$\rho_\theta$ is proper if $\rho_\theta(X)>-\infty$ for all $x\in\CAL X$ and its domain $\text{dom}(\rho_\theta):=\{x\in\CAL X:\rho_\theta(X)<\infty\}\neq \emptyset$.}, and coherent risk measure $\rho_\theta$ can be represented as its biconjugate as follows.
Let $\CAL X^*=\CAL L_p^*(\Omega,\CAL F,\BBM P)$ be the dual space of $\CAL X$ endowed with the weak* topology. Then let
\begin{align*}
    \CAL P_\Omega:=\left\{\phi\in\CAL X^*:\int \phi(\omega)\pr{d\omega}=1,\;\phi\geq 0\right\}
\end{align*}
be the collection of probability density functions with respect to $\BBM P$ on $\Omega$. Every element in $\CAL P_\Omega$ can be identified with a probability measure on $(\Omega, \CAL F)$, which features $\phi$ as its Radon-Nikodyn derivative (or density) with respect to $\BBM P$.

The robust representation of $\rho_\theta$ is then:
\begin{align}\label{eq:rho_inner}
    \rho_\theta(X):=\sup_{\phi\in\Phi(\theta)}\inner{X,\phi}=&\sup_{\phi\in\Phi(\theta)}\int_{\Omega}X(\omega)\phi(\omega)\pr{d\omega},
\end{align}
where $\Phi(\theta) \subset \CAL P_\Omega$ is the {\it risk envelope}, and we write $\Phi:\Theta\rightrightarrows \CAL P_\Omega$ as a correspondence to emphasize the dependence on $\theta$.
According to \cite{rockafellar2002deviation}, the risk envelope is explicitly:
\begin{align}\label{eq:Phi}
    \Phi(\theta) &=\left\{\phi\in\CAL P_\Omega:\inner{\phi,X}\leq \rho_\theta(X)\forall X\in\CAL X\right\}\nonumber\\
    & =\bigcap_{X\in\CAL X} \left\{\phi\in\CAL P_\Omega:\inner{\phi,X}\leq \rho_\theta(X)\right\} \subset \CAL P_\Omega.
\end{align}
Representation \cref{eq:rho_inner} essentially amounts to taking the supremum of expectations of the value function over a set of ``tilted'' distributions.
We must establish the relationship between the continuity of the risk-sensitive value function and the continuity of the risk envelope.

For risk-sensitive MDPs, solving the DP decomposition requires some form of continuity (indeed, lower hemicontinuity) of the risk envelope. However, this assumption is not an obvious condition even if the one-step risk measures are continuous. In addition, the risk envelope $\Phi$ is parameterized by the state, action, transition kernel, and parameter. Continuity of the supremum of the integrals in \cref{eq:rho_inner} will follow from the Berge Maximum Theorem.
This proof technique requires us to establish the continuity of the integral and continuity of the risk envelope $\Phi$ in \cref{eq:rho_inner}.
To show that the integral in \cref{eq:rho_inner} is continuous, we need to appeal to Lebesgue's dominated convergence theorem with varying measures.

\subsection{Jointly Continuous MDPs}

Continuity of sequential risk measures requires additional conditions on the one-step risk measures $\rho_{\theta,t}$. Our main result leverages the class of Markov risk measures, which was first studied by Ruszczynski et. al. \cite[Definition 6]{ruszczynski2010risk}.

\begin{definition}[Markov Conditional Risk Measure and Risk Transition Mapping]\label{def:markov}
Let $\CAL V_{\CAL S}:=\CAL L_p(\CAL S,\CAL B(\CAL S),\BBM P)$. A sequence of risk measures $\{\rho_{\theta,t}\}_{t\in\Na}$, where $\rho_{\theta,t} : \CAL V_\CAL S \rightarrow \Re$, is Markov with respect to $\{s_t\}_{t\in\Na}$ under the following conditions. For any $v(\cdot,\theta)\in \CAL V_{\CAL S}$ and $\theta\in\Theta$, there exists a mapping $\sigma_t:\CAL V_{\CAL S}\times\CAL S\times\CAL P_{\CAL S}\times\Theta\to\Re$ such that
\begin{align}\label{eq:rho_sigma}
    \rho_{\theta,t}(v(\cdot,\theta))=\sigma_t(v(\cdot,\theta), s_t,q_t(s_t,a_t,\theta),\theta),
\end{align}
where $\{\sigma_t\}_{t\in\Na}$ satisfies for every $t\in\Na$:
\begin{enumerate}
    \item For all $(s,a)\in\CAL D(\theta)$, the mapping 
    \begin{align*}
        v(\cdot,\theta)\mapsto\sigma_t(v(\cdot,\theta),s,q_t(s,a,\theta),\theta)
    \end{align*}
    is a coherent risk measure on $\CAL V_{\CAL S}$.
    \item For all $v(\cdot,\theta)\in\CAL V_{\CAL S}$ and every policy $\pi(\cdot,\theta)$ measurable on $\CAL S$, the mapping
    \begin{align*}
        s\mapsto \sigma_t(v(\cdot,\theta),s,q_t(s,\pi(s,\theta),\theta),\theta)
    \end{align*}
    is an element of $\CAL V_{\CAL S}$.
\end{enumerate}
Under these conditions, $\{\sigma_t\}_{t\in\Na}$ are referred to as risk transition mappings. For infinite-horizon risk-sensitive MDP, the risk mappings $\sigma_t$ are {\it stationary} (i.e., $\sigma_t\equiv\sigma$ for all $t \in \Na$).
\end{definition}

By \cite[Theorem 2.2]{ruszczynski2006optimization} (see also, \cite{ruszczynski2014erratum}), each $\sigma_t$ has the form
\begin{align}\label{eqn:Phirtm}
\sigma_t(v(\cdot,\theta),s,q_t(s,a,\theta),\theta)=\sup_{\phi\in\Phi_t(s,q_t(s,a,\theta),\theta)}\inner{v(\cdot,\theta),\phi},
\end{align}
where $\Phi_t(s,q_t(s,a,\theta),\theta) \subset \CAL L_p^*(\CAL S,\CAL B(\CAL S),\mathbb{Q}_t)$. This  aligns with  the representation \cref{eq:rho_inner} and captures the dependence on the state $s$, transition kernel $q_t(s, a, \theta)$, and parameter $\theta$. For the (stationary) infinite-horizon case, we have $\BBM Q_t\equiv \BBM Q$ and $\Phi_t\equiv\Phi$ for some $\Phi$ such that
\begin{align}\label{eq:sigma}
    \sigma(v(\cdot,\theta),s,q(s,a,\theta),\theta)=\sup_{\phi\in\Phi(s,q(s,a,\theta),\theta)}\inner{v(\cdot,\theta),\phi}.
\end{align}
We now present the key assumption on the parametric risk sensitive MDP to have continuous value functions.  

\begin{assumption}[Jointly Continuous MDP]\label{asm:cts_v}
For all $t\in\Na$:
\begin{enumerate}[label=(\roman*)]
    \item $q_t$ is setwise continuous on $\CAL S\times\CAL A\times\Theta$ and there exists a measure $\mathbb{Q}_t\in\CAL M(\CAL S)$ and a measurable function $m_t:\CAL S\times\CAL S\times\CAL A\times\Theta\to [0,\infty)$ such that $q_t(ds'|s,a,\theta) = m_t(s';s,a,\theta) \mathbb{Q}_t(ds')$ and $m_t(\cdot;s,a,\theta)\in \CAL L_p^*(\CAL S,\CAL B(\CAL S),\mathbb{Q}_t)$.    
    \label{asm:cts_v:q}
    \item $c_t$ is continuous on $\CAL S\times\CAL A\times\Theta$ and bounded.\label{asm:cts_v:c}
    \item $\Gamma: \CAL S\times\Theta\rightrightarrows \CAL A$ is continuous and is a compact-valued correspondence.\label{asm:cts_v:A}
    \item $\gamma:\Theta\to(0,\bar\gamma]$ is continuous and $\bar\gamma<1$. \label{asm:cts_v:gamma}
\end{enumerate}
\end{assumption}

Next we present our main result for MDPs satisfying the joint continuity conditions given in \cref{asm:cts_v}. 

\begin{theorem}\label{thm:mdp_cts}
Suppose \cref{asm:cts_v} holds. In addition, suppose the conditional risk measures $\{\rho_{\theta,t}\}_{t\in\Na}$ are Markov, coherent, and the risk envelopes $\Phi_t$ are jointly continuous on $\CAL S\times\CAL M(\CAL S)\times\Theta$.

(i) Then, $\min_{\pi\in\Pi}J_T(\cdot,\cdot;\pi)$ is continuous on $\CAL S\times\Theta$ and $\pi_T^*(\cdot,\cdot)$ is lower semi-continuous on $\CAL S\times\Theta$.

(ii) Suppose in addition that the conditional risk measures $\{\rho_{\theta,t}\}_{t\in\Na}$ are stationary.
Then, $\min_{\pi\in\Pi}J_\infty(\cdot,\cdot;\pi)$ is continuous on $\CAL S\times\Theta$ and $\pi_\infty^*(\cdot,\cdot)$ are lower semi-continuous on $\CAL S\times\Theta$.
\end{theorem}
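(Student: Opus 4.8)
The plan is to reduce the continuity of the risk-sensitive value function to a fixed-point/backward-induction argument built on Berge's Maximum Theorem, exactly as in the risk-neutral Theorem~\ref{thm:dutta}, with the crucial difference that the expectation is replaced by the risk transition mapping $\sigma_t$ in its dual representation~\cref{eqn:Phirtm}. The Bellman recursion for $\FRK P_T$ reads
\begin{align*}
v_t(s,\theta) = \min_{a\in\Gamma(s,\theta)}\Big[c_t(s,a,\theta) + \sigma_t\big(v_{t+1}(\cdot,\theta),\,s,\,q_t(s,a,\theta),\,\theta\big)\Big],
\end{align*}
with terminal condition $v_T(s,\theta)=c_T(s,\theta)$. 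I would prove by backward induction on $t$ that each $v_t$ is continuous on $\CAL S\times\Theta$ and that the argmin correspondence is lower semicontinuous, so that a measurable lower semicontinuous selection $\pi_t^*$ exists. The base case is immediate from continuity of $c_T$ (\cref{asm:cts_v}\ref{asm:cts_v:c}). The inductive step is where Berge's theorem enters: given that $v_{t+1}(\cdot,\theta)$ is continuous (hence, being bounded by boundedness of the costs, lies in $\CAL V_{\CAL S}$), I must show the map
\begin{align*}
(s,a,\theta)\mapsto c_t(s,a,\theta)+\sigma_t\big(v_{t+1}(\cdot,\theta),s,q_t(s,a,\theta),\theta\big)
\end{align*}
is jointly continuous on the graph $\CAL D(\theta)$, and then invoke Berge with the continuous compact-valued feasibility correspondence $\Gamma$ (\cref{asm:cts_v}\ref{asm:cts_v:A}) to conclude that $v_t$ is continuous and $\pi_t^*$ is lower semicontinuous.

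The heart of the argument, and the main obstacle, is establishing continuity of the inner objective $\sigma_t(v_{t+1},s,q_t(s,a,\theta),\theta)$. Writing it via~\cref{eqn:Phirtm} as a supremum of the bilinear pairing $\inner{v_{t+1}(\cdot,\theta),\phi}$ over $\phi\in\Phi_t(s,q_t(s,a,\theta),\theta)$, I would apply Berge's Maximum Theorem \emph{a second time}, now treating $(s,a,\theta)$ as the parameter and $\phi$ as the optimization variable. This requires two ingredients. First, the feasible set $\Phi_t$ must vary (hemi)continuously in $(s,a,\theta)$: since $q_t$ is setwise continuous (\cref{asm:cts_v}\ref{asm:cts_v:q}) and $\Gamma$ is continuous, the composition $(s,a,\theta)\mapsto q_t(s,a,\theta)$ is continuous into $\CAL M(\CAL S)$, and composing with the hypothesized joint continuity of the risk envelope $\Phi_t$ on $\CAL S\times\CAL M(\CAL S)\times\Theta$ gives continuity of the composite correspondence $(s,a,\theta)\mapsto\Phi_t(s,q_t(s,a,\theta),\theta)$. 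Second, the integrand $(\phi;s,a,\theta)\mapsto\inner{v_{t+1}(\cdot,\theta),\phi}$ must be jointly continuous. This is the delicate point the paper flagged: along a convergent sequence $(s_n,a_n,\theta_n)\to(s,a,\theta)$ with $\phi_n\ws\phi$, both the integrand $v_{t+1}(\cdot,\theta_n)$ and the underlying measure change simultaneously, so I cannot pass to the limit with an ordinary dominated convergence argument.

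To overcome this I would use the density representation from \cref{asm:cts_v}\ref{asm:cts_v:q}: writing $q_t(ds'|s,a,\theta)=m_t(s';s,a,\theta)\,\BBM Q_t(ds')$, the pairing becomes an integral against the fixed reference measure $\BBM Q_t$, converting the ``varying-measure'' difficulty into continuity of products $m_t\cdot v_{t+1}$ against a single $\BBM Q_t$. The argument then splits: continuity of $v_{t+1}(\cdot,\theta)$ in $\theta$ (from the inductive hypothesis, using uniform continuity on the compact product of state and parameter spaces), combined with $\CAL L_p$-convergence of the densities $m_t(\cdot;s_n,a_n,\theta_n)$ guaranteed by setwise continuity of $q_t$, lets me invoke a dominated-convergence-with-varying-measures lemma (the tool the authors announce after \cref{eq:rho_inner}) to pass to the limit in $\inner{v_{t+1}(\cdot,\theta_n),\phi_n}$. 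With joint continuity of the integrand and continuity of the correspondence $\Phi_t\circ q_t$ established, Berge's theorem yields continuity of $\sigma_t$, completing the inductive step and hence part~(i).

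For part~(ii), the infinite-horizon stationary case, I would realize $v$ as the fixed point of the Bellman operator
\begin{align*}
(\SF T_\theta v)(s)=\min_{a\in\Gamma(s,\theta)}\Big[c(s,a,\theta)+\gamma(\theta)\,\sigma\big(v(\cdot,\theta),s,q(s,a,\theta),\theta\big)\Big].
\end{align*}
Coherence of the one-step risk measures (monotonicity and translation equivalence in \cref{def:coherent}) makes $\SF T_\theta$ a monotone contraction with modulus $\bar\gamma<1$ on bounded functions, so the value function is the uniform limit of the iterates $\SF T_\theta^n v_0$. I would show each iterate is continuous on $\CAL S\times\Theta$ by the same Berge-based argument as in part~(i), and that the convergence to $v$ is uniform in $\theta$ (using $\bar\gamma<1$ together with boundedness of costs to bound the sup-norm error uniformly). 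A uniform limit of continuous functions being continuous, $v$ is continuous on $\CAL S\times\Theta$; a final application of Berge's theorem to $\SF T_\theta v = v$ gives lower semicontinuity of the stationary optimal policy $\pi_\infty^*$. The principal technical burden throughout remains the joint-continuity-of-the-pairing step, which is why the setwise continuity and density assumptions in \cref{asm:cts_v}\ref{asm:cts_v:q} are doing the essential work.
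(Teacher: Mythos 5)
Your architecture matches the paper's almost exactly: backward induction for $\FRK P_T$ and a contraction argument for $\FRK P_\infty$, with Berge's Maximum Theorem applied twice at each stage --- once to the inner supremum over the risk envelope in \cref{eqn:Phirtm} to get joint continuity of $(s,a,\theta)\mapsto\sigma_t(v_{t+1}(\cdot,\theta),s,q_t(s,a,\theta),\theta)$ (this is the paper's \cref{lem:sigma_cts}), and once to the outer minimization over $\Gamma(s,\theta)$ to get continuity of the value and lower semicontinuity of the optimal policy. Your per-$\theta$ operator $\SF T_\theta$ with uniform-in-$\theta$ convergence of the iterates is the same argument as the paper's contraction $\hat{\CAL T}$ on $\CAL C_b(\CAL S\times\Theta)$ (\cref{lem:T_ctrct}), since the modulus $\bar\gamma$ is uniform in $\theta$ and $\CAL C_b(\CAL S\times\Theta)$ is complete in the supremum norm.

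There are, however, two gaps. First, you never justify that the Bellman recursion and the fixed-point equation actually characterize $\min_{\pi\in\Pi}J_T$ and $\min_{\pi\in\Pi}J_\infty$. For nested Markov coherent risk measures this dynamic-programming principle is not automatic; it is precisely \cite[Theorems 2 and 4]{ruszczynski2010risk}, and the first step of the paper's proof is to verify the hypotheses of those theorems using \cref{asm:cts_v} (setwise continuity of $q$, continuity of the envelope $\Phi$, boundedness and continuity of $c$, compactness and continuity of $\Gamma$). Without this step your induction establishes continuity of functions $v_t$ that have not been identified with the optimal values appearing in the theorem statement. Second, your mechanism for joint continuity of the pairing is flawed as stated: setwise continuity of $q_t$ does \emph{not} give $\CAL L_p$-norm convergence of the densities $m_t(\cdot;s_n,a_n,\theta_n)$; it gives only $\int f\, m_n\, d\BBM Q_t \to \int f\, m\, d\BBM Q_t$ for bounded measurable $f$, i.e., weak convergence of the densities, which is too weak to pass to the limit in a product of three varying factors. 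The paper handles this point differently in \cref{lem:sigma_cts}: it argues that $\phi_n\ws\phi$ together with $q(s_n,a_n,\theta_n)\sw q(s,a,\theta)$ yields $\phi_n\, q(s_n,a_n,\theta_n)\ws\phi\, q(s,a,\theta)$, and then passes to the limit in $\int \hat v(s',\theta_n)\phi_n(s')q(ds'|s_n,a_n,\theta_n)$ via the convergence theorem for varying measures \cite[Theorem 5.5]{billingsley1968convergence}, exploiting the joint continuity and boundedness of $\hat v$. You would need to replace your norm-convergence step with an argument of this type for the inner Berge application to go through.
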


\subsection{Separately Continuous Monotone MDPs}
Next we weaken the joint continuity condition in \cref{asm:cts_v} to separate continuity for monotone MDPs (see \cref{asm:mono} for the requirements on monotone MDPs and \cref{asm:scts_v} for the separate continuity requirements). We recall that if $s_t,s_t'\in\CAL S$ with $S_{t+1}\sim q(s_t,a,\theta)$ and $S_{t+1}'\sim q(s_t',a,\theta)$, then $q(s_t,a,\theta)\preceq q(s_t',a,\theta)$ implies $S_{t+1}\preceq S_{t+1}'$.

\begin{assumption}[Separately Continuous MDP]\label{asm:scts_v}
For every $t\in\Na$:
\begin{enumerate}[label=(\roman*)]
    \item $q_t(\cdot,\cdot,\theta)$ is setwise continuous on $\CAL S\times\CAL A$ for any $\theta\in\Theta$, and $q_t(s,\cdot,\cdot)$ is setwise continuous on $\CAL A\times\Theta$ for any $s\in\CAL S$.\label{asm:scts_v:q}
    \item $c_t(\cdot,\cdot,\theta)$ is continuous on $\CAL S\times\CAL A$ for any $\theta\in\Theta$, and $c_t(s,\cdot,\cdot)$ is continuous on $\CAL A\times\Theta$ for any $s\in\CAL S$.\label{asm:scts_v:c}
    \item $\Gamma(\cdot,\theta)$ is continuous on $\CAL S$ for any $\theta\in\Theta$, and $\Gamma(s,\cdot)$ is continuous on $\Theta$ for any $s\in\CAL S$.\label{asm:scts_v:A}
    \item $\gamma:\Theta\to(0,\bar\gamma]$ is continuous and $\bar\gamma<1$. \label{asm:scts_v:gamma}
\end{enumerate}
\end{assumption}

\begin{theorem}\label{thm:mono_cts}
Suppose \cref{asm:mono} and \cref{asm:scts_v} hold. Also suppose the conditional risk measures $\{\rho_{\theta,t}\}_{t\in\Na}$ are Markov, coherent, and such that $\Phi_t$ is jointly continuous on $\CAL S\times\CAL M(\CAL S)\times\Theta$.

(i) Then, $\min_{\pi\in\Pi}J_T(\cdot,\cdot;\pi)$ is continuous on $\CAL S\times\Theta$ and $\pi_T^*(\cdot,\cdot)$ is lower semi-continuous on $\CAL S\times\Theta$.

(ii) Suppose in addition that the conditional risk measures $\{\rho_{\theta,t}\}_{t\in\Na}$ are stationary.
Then, $\min_{\pi\in\Pi}J_\infty(\cdot,\cdot;\pi)$ is continuous on $\CAL S\times\Theta$ and $\pi_\infty^*(\cdot,\cdot)$ are lower semi-continuous on $\CAL S\times\Theta$.
\end{theorem}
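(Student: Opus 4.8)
The plan is to follow the route by which \cref{thm:dutta:mono} weakens the joint-continuity hypothesis of \cref{thm:mdp_cts} to separate continuity in the risk-neutral setting: under the monotonicity hypotheses of \cref{asm:mono} I would (1) show the value function is monotone non-decreasing in the state, (2) establish continuity in the state for each fixed parameter and continuity in the parameter for each fixed state, and (3) promote these two separate continuities to joint continuity by a monotone sandwiching argument. The continuity of the risk transition mapping $\sigma_t$ established in the proof of \cref{thm:mdp_cts} (via dominated convergence with varying measures together with hemicontinuity of $\Phi_t$) will be the workhorse throughout.

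\emph{Monotonicity in the state.} I would argue by backward induction on the Bellman recursion
\[
v_t(s,\theta)=\min_{a\in\Gamma(s,\theta)}\Big\{c_t(s,a,\theta)+\sigma_t\big(v_{t+1}(\cdot,\theta),s,q_t(s,a,\theta),\theta\big)\Big\},
\]
the base case being monotonicity of the terminal cost from \cref{asm:mono}. For the inductive step, monotonicity of $c_t$ in $s$ and the fact that a coherent (hence monotone, law-invariant) Markov risk measure respects the first stochastic order — so that the dominance $q_t(s,a,\theta)\preceq q_t(s',a,\theta)$ together with a non-decreasing $v_{t+1}(\cdot,\theta)$ yields $\sigma_t(v_{t+1},s,q_t(s,a,\theta),\theta)\le\sigma_t(v_{t+1},s',q_t(s',a,\theta),\theta)$ — make the per-action objective non-decreasing in $s$. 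Combined with the nesting $\Gamma(s',\theta)\subseteq\Gamma(s,\theta)$ from \cref{asm:mono}, taking minima over the nested feasible sets gives $v_t(s,\theta)\le v_t(s',\theta)$ whenever $s\le s'$.

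\emph{Separate continuity.} Continuity in $s$ for fixed $\theta$ comes almost for free: freezing $\theta=\theta_0$ reduces the separate-continuity hypotheses of \cref{asm:scts_v} to the joint-continuity hypotheses of \cref{asm:cts_v} for an MDP over a trivial one-point parameter space, so \cref{thm:mdp_cts} applies and yields continuity of $v_t(\cdot,\theta_0)$. Continuity in $\theta$ for fixed $s$ is the substantive step and must be proved afresh by backward induction, since fixing a state does not reduce the MDP. Assuming inductively that $v_{t+1}(s',\cdot)$ is continuous in $\theta$ for every $s'$, boundedness of the costs and dominated convergence give $v_{t+1}(\cdot,\theta_n)\cp v_{t+1}(\cdot,\theta)$ as $\theta_n\to\theta$; feeding this into the continuity of $\sigma_t$ — now with $s$ frozen and the pair $(a,\theta)$ varying, using setwise continuity of $q_t(s,\cdot,\cdot)$ and continuity of $\Phi_t$ — shows the per-action objective is jointly continuous in $(a,\theta)$, after which Berge's Maximum Theorem applied with the continuous, compact-valued correspondence $\Gamma(s,\cdot)$ delivers continuity of $v_t(s,\cdot)$.

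\emph{Joint continuity and the remaining claims.} With $v_t$ monotone non-decreasing in $s$ and separately continuous, I would obtain joint continuity on $\CAL S\times\Theta$ by sandwiching: given $(s_n,\theta_n)\to(s,\theta)$, pick $s^-\le s\le s^+$ near $s$; continuity in $s$ controls $v_t(s^\pm,\theta)$, continuity in $\theta$ controls $v_t(s^\pm,\theta_n)$, and monotonicity traps $v_t(s_n,\theta_n)$ between $v_t(s^-,\theta_n)$ and $v_t(s^+,\theta_n)$ for large $n$ (boundary states being handled one-sidedly). Lower semicontinuity of $\pi_T^*$ then follows from upper hemicontinuity of the argmin correspondence in Berge's Theorem, exactly as in \cref{thm:dutta:mono}. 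For the infinite-horizon part (ii), the stationary Bellman operator is a $\bar\gamma$-contraction in the sup norm (coherent risk measures are non-expansive and $\gamma(\theta)\le\bar\gamma<1$) and preserves monotonicity, so its fixed point $v$ is monotone in $s$; value iteration converges uniformly in $\theta$, each iterate is continuous in $\theta$ by the finite-horizon induction above, hence so is the limit $v(s,\cdot)$, while continuity in $s$ again follows from \cref{thm:mdp_cts}, and the same sandwiching and Berge arguments conclude. The main obstacle I anticipate is the continuity of the risk transition mapping in the parameter direction: pushing convergence through $\sigma_t$ requires handling the simultaneous perturbation of the integrand $v_{t+1}(\cdot,\theta)$, the transition measure $q_t(s,a,\theta)$, and the risk envelope $\Phi_t$, which is precisely where the varying-measure dominated convergence and the hemicontinuity of $\Phi_t$ must be combined with care.
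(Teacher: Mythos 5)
Your proposal assembles the same ingredients as the paper --- monotonicity of the risk transition mapping via first-order stochastic dominance and coherence (\cite[Lemma 5.1]{ruszczynski2006optimization}), separate continuity, the ``monotone plus separately continuous implies jointly continuous'' upgrade, Berge's Maximum Theorem, and a $\bar\gamma$-contraction for the infinite horizon --- but arranges them differently. The paper works in the space $\CAL C_b^\uparrow(\CAL S\times\Theta)$ of jointly continuous, bounded, monotone functions and shows the Bellman operator \cref{eq:T_tilde} preserves it: for jointly continuous monotone input $\tilde v$, the map $\sigma_{q,\tilde v}$ is separately continuous (\cref{lem:sigma_mb}) and monotone in $s$, hence jointly continuous on $\CAL S\times\CAL A\times\Theta$ by \cite[Lemma 2]{dutta1994parametric}; Berge then keeps every iterate (and the contraction fixed point) jointly continuous. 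You instead keep the two separate continuities of the value function itself as the invariant --- the $s$-direction by the economical reduction to \cref{thm:mdp_cts} over a frozen one-point parameter space (a move not in the paper), the $\theta$-direction by a fresh induction pairing strong $\CAL L_p$ convergence of the slices $v_{t+1}(\cdot,\theta_n)$ against weak* convergence of the tilted measures --- and only at the end sandwich $v$ itself. For the value-function continuity claims both routes work: your strong/weak pairing is a legitimate substitute for the paper's argument in \cref{lem:sigma_cts}, which needs jointly continuous input to $\sigma$, and you correctly flag this as the delicate step.

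There is, however, a genuine gap at the policy statement. Lower semicontinuity of $\pi_T^*(\cdot,\cdot)$ on $\CAL S\times\Theta$ requires Berge's theorem applied jointly in $(s,\theta)$, which in turn requires the per-action objective $(s,a,\theta)\mapsto c_t(s,a,\theta)+\sigma_t(v_{t+1}(\cdot,\theta),s,q_t(s,a,\theta),\theta)$ to be jointly continuous on $\CAL S\times\CAL A\times\Theta$ (and $\Gamma$ to be a jointly continuous correspondence, which under \cref{asm:scts_v} also only follows via its monotone nesting in \cref{asm:mono}). Joint continuity of $v_{t+1}$, which your sandwich delivers, does not give this: $c_t$ and $q_t$ remain only separately continuous, so the objective is still only separately continuous in $(s,a,\theta)$, and upper hemicontinuity of the argmin correspondence cannot be invoked ``exactly as in \cref{thm:dutta:mono}'' without more. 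The fix is precisely the paper's move: note that the objective is monotone in $s$ (which you already prove) and separately continuous, then apply \cite[Lemma 2]{dutta1994parametric} to the \emph{objective} rather than to $v$; a single application of Berge then yields the value-function continuity and the policy lower semicontinuity simultaneously. In short, your value-function-level sandwich is sound but covers only part of the conclusion; once you sandwich at the objective level, you have essentially reconstructed the paper's proof.
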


In Section \ref{sec:example}, we identify the conditions on the $\Phi_t$ such that $\Phi_t$ is a continuous correspondence. This yields continuity of $\sigma_t$ in the value functions, state, action, and the parameter by an application of Berge's maximum theorem \cite{Aliprantis2006}.

\subsection{Lipschitz MDPs}\label{sec:lip}
Coherent risk measures are subdifferentiable, see, e.g., \cite[Section 3]{ruszczynski2006optimization}. Since bounded subgradients imply Lipschitz continuity, this motivates us to demonstrate that the parametric value function is Lipschitz continuous in the state and parameter. This result allows us to establish explicit perturbation bounds for the value functions of risk-sensitive MDPs. 

For any two metric spaces $\CAL Y\times\CAL U$ with metrics $d_{\CAL Y}$ and $d_{\CAL U}$, we define the metric on $\CAL Y\times\CAL U$ to be
\begin{align*}
    d_{\CAL Y\times\CAL U}((y,u),(y',u')):=d_{\CAL Y}(y,y')+d_{\CAL U}(u,u').
\end{align*}
Let $2^{\CAL U}$ denote the set of all compact subsets of $\CAL U$. We endow $2^{\CAL U}$ with the Hausdorff metric
\begin{align*}
    d_H(U,U') := \max \left\{ \sup_{u\in U} d_{\CAL U}(u, U'), \sup_{u'\in U'} d_{\CAL U}(u', U)\right\},
\end{align*}
for all $U,U' \in 2^{\CAL U}$, where 
\begin{align*}
    d_{\CAL U}(u',U):=\inf_{u\in U}d_{\CAL U}(u',u).
\end{align*}

We recall from \cite[Definition (ii), p.5]{hinderer2005lipschitz}: a mapping $f:\CAL X\times\CAL Y\to\CAL Z$ is uniformly Lipschitz continuous on $\CAL Y$ if
\begin{align*}
    \sup_{y\in\CAL Y}\sup_{x\neq x'}\frac{d_{\CAL Z}(f(x,y), f(x',y))}{d_{\CAL X}(x,x')}<\infty.
\end{align*}

By \cite[Corollary 3.1]{ruszczynski2006optimization}, a coherent risk measure $\rho_{\theta,t}$ is continuous and subdifferentiable on the interior of its domain. Thus, by making additional boundedness assumptions on $\rho_{\theta, t}$, we can preserve Lipschitz continuity over the entirety of $\dom{\rho_{\theta,t}}$ by \cite[Lemma 2.1]{inoue2003worst}. In this case, we let $L_{\rho_{\theta,t}}<\infty$ be the Lipschitz coefficient of $\rho_{\theta,t}$ for every $\theta\in\Theta$, and we assume $L_{\Theta,t}:=\sup_{\theta\in\Theta}\{L_{\rho_{\theta,t}}\}<\infty$.

Let us define $\CAL M_{W_1}(\CAL S)$ as the set of measures over $\CAL S$ endowed with the Wasserstein metric, denoted by $W_1$, (which makes it a complete separable metric space). For a risk measure $\rho_{\theta,t}$, define $\Psi_t:\CAL S\times\CAL A\times\Theta\rightrightarrows \CAL M_{W_1}(\CAL S)$ as 
\begin{align}\label{eqn:Psi}
    \Psi_t(s,a,\theta) = \left\{\psi\in\CAL M_{W_1}(\CAL S): \psi(ds') = \phi(s')q(ds'|s,a,\theta), \phi\in\Phi_t(s,q(s,a,\theta),\theta)\right\}.
\end{align}
We can define the usual Haussdorff metric on the compact subsets of $\CAL M_{W_1}(\CAL S)$.

\begin{assumption}[Lipschitz MDP]\label{asm:lip}
The following statements hold:
\begin{enumerate}[label=(\roman*)]
    \item \label{asm:lip:A} $\Gamma$ is compact-valued and there exists $L_D \geq 0$, such that, for all $s, s' \in \CAL S$, we have
    \begin{align*}
        d_H(\Gamma(s,\theta), \Gamma(s',\theta')) \leq L_D \Big(d_{\CAL S}(s, s')+d_\Theta(\theta,\theta')\Big).
    \end{align*}
    \item The correspondence $\Psi$ is compact-valued and Lipschitz continuous with Lipschitz coefficient $L_{\rho,t}$:\label{asm:lip:rho}
    \begin{align*}
        d_W(\Psi(s,a,\theta), \Psi(s',a',\theta')) \leq L_{\rho,t} \Big(d_{\CAL S}(s, s')+d_{\CAL A}(a,a') + d_\Theta(\theta,\theta') \Big).
    \end{align*}
    \item The cost function $c_t$ is $L_{c_t}$-Lipschitz continuous on $\CAL S\times\CAL A\times\Theta$.\label{asm:lip:c}
    \item For infinite-horizon risk-sensitive MDP, $L_{c_t}\equiv L_c$ and $L_{\rho_t}\equiv L_\rho$. Further, $L_D$ in \ref{asm:lip:A} satisfy $(1+L_\rho)(1+L_D) < 1/\bar\gamma$.\label{asm:lip:gamma}
\end{enumerate}
\end{assumption}


\begin{theorem}\label{thm:mdp_lip}
Suppose \cref{asm:lip} holds. Also suppose the conditional risk measures $\{\rho_{\theta,t}\}_{t\in\Na}$ are Markov, coherent, and such that $\Phi_t$ is jointly continuous on $\CAL S\times\CAL M(\CAL S)\times\Theta$. 
Then
\begin{enumerate}[label=(\roman*)]
    \item $\min_{\pi\in\Pi}J_T(\cdot,\cdot;\pi)$ is $L_{v_0}$-Lipschitz, where $L_{v_t}$ is recursively defined as
    \begin{align*}
        L_{v_T}=&L_{c_T}, \\
        L_{v_t}=&L_{c_t} (1+L_D)+L_{v_{t+1}} (1+L_{\rho_{\theta,t}}) (1+L_D), \text{ for } t=0, \ldots, T-1.
    \end{align*}
    \item Suppose in addition $\{\rho_{\theta,t}\}_{t\in\Na}$ are stationary, then $\min_{\pi\in\Pi}J_\infty(\cdot,\cdot;\pi)$ is $L_\infty$-Lipschitz where
    \begin{align*}
        L_\infty = \frac{L_c(1+L_D)}{1-\bar\gamma (1+L_\rho)(1+L_D) }.
    \end{align*}
\end{enumerate}
\end{theorem}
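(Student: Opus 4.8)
The plan is to work entirely through the dynamic-programming recursion, proving the finite-horizon claim (i) by backward induction on $t$ and the infinite-horizon claim (ii) by a contraction argument on the space of jointly Lipschitz functions. For (i) I would start from the Bellman recursion
\[
    v_t(s,\theta) = \min_{a \in \Gamma(s,\theta)} \left\{ c_t(s,a,\theta) + \sigma_t(v_{t+1}(\cdot,\theta), s, q_t(s,a,\theta), \theta) \right\},
\]
with terminal value $v_T(s,\theta) = c_T(s,\theta)$. The terminal function is $L_{c_T}$-Lipschitz by the cost hypothesis in \cref{asm:lip}, giving the base case $L_{v_T} = L_{c_T}$. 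For the inductive step I assume $v_{t+1}$ is $L_{v_{t+1}}$-Lipschitz on $\CAL S \times \Theta$ and bound $|v_t(s,\theta) - v_t(s',\theta')|$ by the standard matching-of-minimizers device: let $a^*$ attain the minimum at $(s,\theta)$; by the Hausdorff-Lipschitz continuity of $\Gamma$ in \cref{asm:lip} there is a feasible $a' \in \Gamma(s',\theta')$ with $d_{\CAL A}(a^*,a') \le L_D(d_{\CAL S}(s,s') + d_\Theta(\theta,\theta'))$. Evaluating the objective at $a'$ upper-bounds $v_t(s',\theta')$, and the symmetric choice yields the reverse inequality.

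The core estimate is the perturbation of the risk-transition term,
\[
    \left| \sigma_t(v_{t+1}(\cdot,\theta'), s', q_t(s',a',\theta'), \theta') - \sigma_t(v_{t+1}(\cdot,\theta), s, q_t(s,a^*,\theta), \theta) \right|.
\]
Using the dual representation \eqref{eqn:Psi} I rewrite each occurrence of $\sigma_t$ as $\sup_{\psi \in \Psi_t(s,a,\theta)} \int v_{t+1}(s'',\theta)\,\psi(ds'')$ and split the difference at the intermediate quantity $\sup_{\psi \in \Psi_t(s',a',\theta')} \int v_{t+1}(s'',\theta)\,\psi(ds'')$. The first piece changes only the parameter inside the integrand and is bounded by $\sup_{s''}|v_{t+1}(s'',\theta') - v_{t+1}(s'',\theta)| \le L_{v_{t+1}} d_\Theta(\theta,\theta')$, since every $\psi$ is a probability measure. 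The second piece keeps the $L_{v_{t+1}}$-Lipschitz integrand fixed and moves only the envelope; here I invoke the Kantorovich-Rubinstein duality for $W_1$, which gives $|\int v_{t+1}\,d\psi - \int v_{t+1}\,d\tilde\psi| \le L_{v_{t+1}} W_1(\psi,\tilde\psi)$, together with the Hausdorff-Lipschitz continuity of $\Psi$ posited in \cref{asm:lip} to obtain the bound $L_{v_{t+1}} L_{\rho,t}(d_{\CAL S}(s,s') + d_{\CAL A}(a^*,a') + d_\Theta(\theta,\theta'))$. Substituting the action estimate and collecting with the $L_{c_t}(1+L_D)$ cost contribution yields the stated recursion for $L_{v_t}$.

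For (ii) I would identify the stationary value function with the fixed point of the Bellman operator
\[
    (\CAL T v)(s,\theta) = \min_{a \in \Gamma(s,\theta)} \left\{ c(s,a,\theta) + \gamma(\theta)\, \sigma(v(\cdot,\theta), s, q(s,a,\theta), \theta) \right\},
\]
where positive homogeneity of the coherent one-step measures lets the discount factor be pulled out of the nested composition, and where monotonicity and translation equivariance make $\sigma$ nonexpansive in the supremum norm, so that $\CAL T$ is a $\bar\gamma$-contraction in that norm. Repeating the single-step estimate, now carrying the extra factor $\gamma(\theta) \le \bar\gamma$ on the risk-transition term, shows that $\CAL T$ sends every $L_\infty$-Lipschitz function to an $L_\infty$-Lipschitz function whenever $L_\infty = L_c(1+L_D) + \bar\gamma L_\infty(1+L_\rho)(1+L_D)$. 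Under the contraction condition $\bar\gamma(1+L_\rho)(1+L_D) < 1$ in \cref{asm:lip} this fixed-point equation has the positive solution $L_\infty = L_c(1+L_D)/[1 - \bar\gamma(1+L_\rho)(1+L_D)]$, and since the $L_\infty$-Lipschitz functions form a closed (hence complete) set invariant under the sup-norm contraction $\CAL T$, the value function lies in it.

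I expect the main obstacle to be the second piece of the risk-transition estimate: transferring the Hausdorff-Lipschitz regularity of the envelope correspondence $\Psi$, measured in the Wasserstein distance on $\CAL M_{W_1}(\CAL S)$, into a genuine bound on the gap between two suprema over different (compact) sets of measures. This succeeds precisely because the inductive hypothesis makes the integrand $v_{t+1}(\cdot,\theta)$ Lipschitz in the state, so the Kantorovich-Rubinstein bound converts the $W_1$-distance between envelopes into a value gap; one must also check that the direct parameter-dependence of $v_{t+1}$ and the compact-valuedness of $\Psi$ are handled so that the finite-horizon recursion and the infinite-horizon fixed-point constant remain mutually consistent.
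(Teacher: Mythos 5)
Your proposal is correct and takes essentially the same route as the paper: your core estimate---rewriting $\sigma_t$ via the dual representation \eqref{eqn:Psi} and splitting the perturbation into the integrand's $\theta$-dependence (bounded by $L_{v_{t+1}}d_\Theta$, since each $\psi$ is a probability measure) and the movement of the envelope (bounded via Kantorovich--Rubinstein together with \cref{asm:lip} \ref{asm:lip:rho})---is precisely the content of the paper's \cref{lem:sigma_lip}, and your backward induction for (i) and contraction/invariant-set argument for (ii) mirror the paper's structure. The only difference is one of packaging: where you prove the matching-of-minimizers step over the Hausdorff--Lipschitz correspondence $\Gamma$ and the Lipschitz-preservation of the fixed point by hand, the paper cites \cite[Lemma 3.2]{hinderer2005lipschitz} and \cite[Theorem 4.1]{hinderer2005lipschitz} for exactly those steps.
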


\subsection{Discussion}

\cref{thm:mdp_cts} and \cref{thm:mono_cts} resolve the questions Q1 and Q2 under different hypotheses on the risk sensitive MDPs. Indeed, the continuity of $\min_{\pi\in\Pi}J_T(\cdot, \cdot)$ and $\min_{\pi\in\Pi}J_\infty(\cdot, \cdot)$ follows by assuming that: (i) the one-step risk measures are Markov and coherent (and also stationary for the infinite-horizon case) with continuous risk envelopes; and (ii) the cost function, the transition kernel, the admissible action set, and the discount factor are all continuous (as given in \cref{asm:cts_v} and \cref{asm:mono} with \cref{asm:scts_v}).

Under further assumptions on the MDP and the risk measure, \cref{thm:mdp_lip}  gives explicit bounds on the difference in value functions obtained from the nominal parameter and perturbed parameter, since the value functions are Lipschitz in the parameter.

\section{Proofs of Main Results}
\label{sec:proof}

We prove \cref{thm:mdp_cts}, \cref{thm:mono_cts}, and \cref{thm:mdp_lip} in this section. 

\subsection{Proof of Theorem \ref{thm:mdp_cts}}\label{pf:mdp_cts}

These claims are proven with the help of \cite[Theorem 1]{dutta1994parametric}, \cite[Theorem 2]{ruszczynski2010risk}, and \cite[Theorem 4]{ruszczynski2010risk}. We first give the proof for $\FRK P_\infty$, and then elaborate on the variations needed for $\FRK P_T$.

{\bf Case of $\FRK P_\infty$: }The proof for $\FRK P_\infty$ consists of the following steps:
\begin{enumerate}
    \item We apply \cite[Theorem 4]{ruszczynski2010risk} to show that $\FRK P_\infty$ has a DP decomposition.
    \item We next show that the risk-sensitive Bellman operator is a contraction in $\CAL C_b(\CAL S\times\Theta)$. 
    \item Then, the conclusion follows by Berge's Maximum Theorem.  
\end{enumerate}

We start by verifying conditions (i)-(v)\footnote{The joint continuity of $q$, $\Phi$, $c$ and $\CAL A$ are further required by \cite[p.604]{ruszczynski2014erratum}.} required by \cite[Theorem 4]{ruszczynski2010risk} for $\FRK P_\infty$. For every $\theta\in\Theta$:

\begin{enumerate}
    \item By \cref{asm:cts_v} \ref{asm:cts_v:q}, $q(\cdot,\cdot,\theta)$ is setwise continuous, which yields condition (i).
    \item The $\{\rho_{\theta,t}\}_{t\in\Na}$ in \cref{def:markov} are stationary Markov risk measures. Then, the fact that $\Phi$ is continuous yields condition (ii).
    \item By \cref{asm:cts_v} \ref{asm:cts_v:c}, boundedness and continuity of $c$ yields conditions (iii) and (iv).
    \item \cref{asm:cts_v} \ref{asm:cts_v:A} yields condition (v).
\end{enumerate}

Then, by \cite[Theorem 4]{ruszczynski2010risk}, $\FRK P_\infty$ can be solved by computing the optimal risk-sensitive value function $v(\cdot,\theta)$ which satisfies:
\begin {align}\label{eq:v_sigma}
    v(s,\theta)=&\min_{a\in\Gamma(s,\theta)} c(s,a,\theta) + \gamma(\theta)\sigma(v(\cdot,\theta), s, q(s,a,\theta),\theta),\, \forall s \in \CAL S,
\end{align}
where $\sigma$ is defined in \cref{eq:rho_sigma}. Now define the mapping $\hat{\CAL T}:\CAL C_b(\CAL S\times\Theta)\to\CAL C_b(\CAL S\times\Theta)$ by
\begin{align}\label{eq:T_hat}
    \hat{\CAL T}(\hat v)(s,\theta) :=& \min_{a\in\Gamma(s,\theta)} c(s,a,\theta)+ \gamma(\theta)\sigma(\hat v(\cdot,\theta),s,q(s,a,\theta),\theta),\, \forall (s, \theta) \in \CAL S \times \Theta.
\end{align}
We show that $v$ in \cref{eq:v_sigma} is the fixed point of $\hat{\CAL T}$ (i.e., $v=\hat{\CAL T}(v)$), which requires the following auxiliary result.
\begin{lemma}\label{lem:sigma_cts}
For any $\hat v\in\CAL C_b(\CAL S\times\Theta)$, the mapping:
\begin{align}\label{eq:sigma_qv}
    \sigma_{q,\hat v}:(s,a,\theta)\mapsto \sigma(\hat v(\cdot,\theta),s,q(s,a,\theta),\theta),
\end{align}
is jointly continuous and bounded on $\CAL S\times\CAL A\times\Theta$. 
\end{lemma}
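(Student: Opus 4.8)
The plan is to recognize $\sigma_{q,\hat v}(s,a,\theta)$ as the value function of a parametric maximization problem and to establish its continuity via Berge's Maximum Theorem, with boundedness being essentially immediate. Using the robust representation \cref{eq:sigma}, we have
\begin{align*}
    \sigma_{q,\hat v}(s,a,\theta) = \sup_{\phi\in\Phi(s,q(s,a,\theta),\theta)} \inner{\hat v(\cdot,\theta),\phi} = \sup_{\phi\in\Phi(s,q(s,a,\theta),\theta)} \int_{\CAL S} \hat v(s',\theta)\phi(s')\,\BBM Q(ds').
\end{align*}
Here the optimization variable is the density $\phi$ (living in $\CAL L_p^*(\CAL S,\CAL B(\CAL S),\BBM Q)$ with the weak* topology), the ``parameter'' is $(s,a,\theta)$, the feasible correspondence is $(s,a,\theta)\mapsto \Phi(s,q(s,a,\theta),\theta)$, and the objective $g(\theta,\phi):=\inner{\hat v(\cdot,\theta),\phi}$ depends on $(s,a)$ only through $\phi$. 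Boundedness is direct: since each $\phi$ is a density with respect to $\BBM Q$ (so $\phi\geq 0$ and $\int\phi\,d\BBM Q = 1$ by monotonicity and translation equivariance of $\rho_\theta$), we get $\abs{\sigma_{q,\hat v}(s,a,\theta)}\leq \norm{\hat v}_\infty$. It then remains to verify the two hypotheses of Berge's theorem: continuity (with compact values) of the feasible correspondence, and joint continuity of the objective.

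For the correspondence, I would first observe that the map $(s,a,\theta)\mapsto (s,q(s,a,\theta),\theta)$ is continuous into $\CAL S\times\CAL M(\CAL S)\times\Theta$: the first and third coordinates are trivially continuous, and the middle coordinate is continuous because $q$ is setwise continuous by \cref{asm:cts_v}\ref{asm:cts_v:q} (so the topology on $\CAL M(\CAL S)$ is taken to be that of setwise convergence, matching the domain on which $\Phi$ is assumed continuous). Since $\Phi$ is jointly continuous by hypothesis, the composition of the continuous correspondence $\Phi$ with this continuous map is again a continuous correspondence. Compactness of the values follows from coherence of $\rho_\theta$: the risk envelope is a weak*-closed, convex, norm-bounded subset of $\CAL P_{\CAL S}$, hence weak*-compact by the Banach--Alaoglu theorem.

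The crux is the joint continuity of $g(\theta,\phi)=\int_{\CAL S}\hat v(s',\theta)\phi(s')\,\BBM Q(ds')$ with respect to the product of the metric topology on $\Theta$ and the weak* topology on $\phi$. Taking $\theta_n\to\theta$ and $\phi_n\ws\phi$, I would split
\begin{align*}
    g(\theta_n,\phi_n)-g(\theta,\phi) = \int_{\CAL S}\big(\hat v(s',\theta_n)-\hat v(s',\theta)\big)\phi_n(s')\,\BBM Q(ds') + \Big(\int_{\CAL S}\hat v(s',\theta)\phi_n\,\BBM Q - \int_{\CAL S}\hat v(s',\theta)\phi\,\BBM Q\Big).
\end{align*}
The second bracket tends to $0$ directly from $\phi_n\ws\phi$, because $\hat v(\cdot,\theta)$ is bounded and hence lies in $\CAL L_p(\CAL S,\CAL B(\CAL S),\BBM Q)$. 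For the first term, Hölder's inequality bounds it by $\norm{\hat v(\cdot,\theta_n)-\hat v(\cdot,\theta)}_p\,\norm{\phi_n}_q$; the factor $\norm{\phi_n}_q$ is uniformly bounded since a weak* convergent sequence is norm-bounded by the uniform boundedness principle, and $\norm{\hat v(\cdot,\theta_n)-\hat v(\cdot,\theta)}_p\to 0$ by the dominated convergence theorem, using that $\hat v$ is jointly continuous (so the integrand converges pointwise) and bounded (so it is dominated by a constant, integrable against the probability measure $\BBM Q$). Both terms vanish, proving joint continuity of $g$.

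With a continuous compact-valued correspondence and a jointly continuous objective in hand, Berge's Maximum Theorem yields that $\sigma_{q,\hat v}$ is continuous on $\CAL S\times\CAL A\times\Theta$, which together with the bound $\abs{\sigma_{q,\hat v}}\leq\norm{\hat v}_\infty$ completes the proof. I expect the main obstacle to be the joint-continuity step for $g$: the difficulty is that the integrand $\hat v(\cdot,\theta_n)$ and the measure encoded by $\phi_n$ vary simultaneously, so weak* convergence alone is insufficient. The splitting above, combined with the norm-boundedness of weak* convergent sequences and a dominated-convergence estimate, is exactly the ``dominated convergence with varying measures'' device flagged before the lemma.
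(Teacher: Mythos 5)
Your proof is correct, and at the strategic level it mirrors the paper's: both cast $\sigma_{q,\hat v}$ as the value of a parametric maximization over the composed correspondence $(s,a,\theta)\rightrightarrows\Phi(s,q(s,a,\theta),\theta)$, establish continuity and compact-valuedness of that correspondence (your Alaoglu sketch is precisely the content of the result the paper cites, \cite[Proposition 6.2]{ruszczynski2006optimization}), and conclude via Berge's Maximum Theorem. Where you genuinely diverge is the crux step — joint continuity of the integral objective — and the divergence traces to an interpretive choice on which the paper itself is inconsistent. You treat $\phi\in\Phi$ as a density with respect to the \emph{fixed} reference measure $\BBM Q$, so the objective is $g(\theta,\phi)=\int\hat v(s',\theta)\phi(s')\,\BBM Q(ds')$ and the kernel enters only through the correspondence; this matches the paper's own usage in the proof of \cref{lem:T_ctrct} and in \cref{eq:Phi_smth}. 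The paper's proof of \cref{lem:sigma_cts}, by contrast, pairs $\hat v\,\phi$ against the \emph{varying} measure $q(ds'|s,a,\theta)$ (consistent with \cref{eqn:Psi}), so it must handle an integrand and a measure that move simultaneously: it asserts that $\phi_n\ws\phi$ and $q(s_n,a_n,\theta_n)\sw q(s,a,\theta)$ imply $\phi_n\, q(s_n,a_n,\theta_n)\ws\phi\, q(s,a,\theta)$ — a product-convergence claim stated without proof — and then invokes Billingsley's theorem on convergence of integrals against weakly convergent measures. Your route buys self-containedness: the splitting, Hölder bound, norm-boundedness of weak*-convergent sequences (Banach--Steinhaus), and dominated convergence are all elementary, and you avoid the unproven product-convergence step entirely; you also make the boundedness claim explicit ($\abs{\sigma_{q,\hat v}}\leq\norm{\hat v}_\infty$), which the paper leaves implicit. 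What the paper's formulation buys is fidelity to the multikernel representation of Markov risk measures (densities tilting the transition kernel itself), which is the form used later in \cref{eqn:Psi} and in the Lipschitz analysis; under your normalization one should just be aware that the envelope $\Phi$ must then absorb the kernel density $m(\cdot;s,a,\theta)$, exactly as in \cref{eq:Phi_smth}, for $\sigma$ to represent $\rho_{\theta,t}$ of a function of the next state.
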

\begin{proof}
See \cref{pf:sigma_cts}.
\end{proof}

By \cref{lem:sigma_cts},
together with the continuity of $c$, $\CAL A$, and $\gamma$ from \cref{asm:cts_v} \ref{asm:cts_v:c}, \ref{asm:cts_v:A}, and \ref{asm:cts_v:gamma}, we establish that $\hat{\CAL T}(\hat v)\in\CAL C_b(\CAL S\times\Theta)$ by applying Berge's Maximum Theorem to the RHS of \cref{eq:T_hat}. Then, $\hat{\CAL T}$ is a contraction mapping by the following result.

\begin{lemma}\label{lem:T_ctrct}
The mapping $\hat{\CAL T}:\CAL C_b(\CAL S\times\Theta)\to\CAL C_b(\CAL S\times\Theta)$ is a $\bar \gamma$-contraction in the supremum norm.
\end{lemma}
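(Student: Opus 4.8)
The plan is to show that $\hat{\CAL T}$ contracts distances in the supremum norm with modulus $\bar\gamma$. Fix $\hat v_1, \hat v_2 \in \CAL C_b(\CAL S\times\Theta)$ and a point $(s,\theta)$. For each admissible action $a \in \Gamma(s,\theta)$, I would compare the two inner expressions $c(s,a,\theta) + \gamma(\theta)\sigma(\hat v_i(\cdot,\theta), s, q(s,a,\theta),\theta)$ for $i=1,2$. The cost term $c(s,a,\theta)$ cancels, so the difference at a fixed action reduces to $\gamma(\theta)$ times the difference of the two risk transition mappings evaluated at $\hat v_1(\cdot,\theta)$ and $\hat v_2(\cdot,\theta)$.

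The key step is to bound $|\sigma(\hat v_1(\cdot,\theta), s, q, \theta) - \sigma(\hat v_2(\cdot,\theta), s, q, \theta)|$ by $\norm{\hat v_1 - \hat v_2}_\infty$. Using the dual representation \cref{eq:sigma}, $\sigma$ is a supremum of linear functionals $\inner{\cdot,\phi}$ over $\phi \in \Phi(s,q,\theta)$, where each $\phi$ is a probability density with respect to $\BBM Q$ (so $\phi \geq 0$ and integrates to one). For a supremum of such expectation functionals, the standard inequality $|\sup_\phi \inner{\hat v_1,\phi} - \sup_\phi \inner{\hat v_2,\phi}| \leq \sup_\phi |\inner{\hat v_1 - \hat v_2,\phi}|$ holds, and since each $\phi$ is a probability density, $|\inner{\hat v_1 - \hat v_2,\phi}| \leq \norm{\hat v_1(\cdot,\theta) - \hat v_2(\cdot,\theta)}_\infty \leq \norm{\hat v_1 - \hat v_2}_\infty$. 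This is exactly the nonexpansiveness of a coherent risk measure (which follows from monotonicity and translation equivariance), so I can alternatively invoke that $\sigma(\cdot,s,q,\theta)$ is a coherent risk measure by \cref{def:markov} and cite the well-known fact that coherent risk measures are $1$-Lipschitz in the supremum norm.

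Combining, for every $a \in \Gamma(s,\theta)$ the per-action difference is bounded by $\gamma(\theta)\norm{\hat v_1 - \hat v_2}_\infty \leq \bar\gamma \norm{\hat v_1 - \hat v_2}_\infty$. Then I would apply the elementary inequality $|\min_a f_1(a) - \min_a f_2(a)| \leq \sup_a |f_1(a) - f_2(a)|$ to the minimization over $\Gamma(s,\theta)$, which preserves the bound. Taking the supremum over $(s,\theta)$ yields $\norm{\hat{\CAL T}(\hat v_1) - \hat{\CAL T}(\hat v_2)}_\infty \leq \bar\gamma \norm{\hat v_1 - \hat v_2}_\infty$, establishing the $\bar\gamma$-contraction.

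The main obstacle is the risk-measure estimate in the second paragraph: I must be careful that the inequality $|\sigma(\hat v_1) - \sigma(\hat v_2)| \leq \norm{\hat v_1 - \hat v_2}_\infty$ genuinely uses both that the densities $\phi$ integrate to one (to pass from the $\CAL L_p$ pairing to the sup norm) and the coherence properties; the positive homogeneity and convexity are not needed here, but monotonicity and translation equivariance are exactly what make a coherent risk measure nonexpansive. Everything else (the min-difference inequality and the discount bound $\gamma(\theta)\leq\bar\gamma$) is routine, and the well-definedness of $\hat{\CAL T}$ as a self-map of $\CAL C_b(\CAL S\times\Theta)$ has already been secured via \cref{lem:sigma_cts} and Berge's theorem.
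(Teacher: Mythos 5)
Your proof is correct and follows essentially the same route as the paper's: both cancel the cost term, use the dual representation of $\sigma$ over a risk envelope of probability densities (equivalently, nonexpansiveness of the coherent risk mapping) to bound the difference by $\norm{\hat v_1-\hat v_2}_\infty$, and then apply $\gamma(\theta)\leq\bar\gamma$. The only difference is bookkeeping at the minimization step: you invoke the elementary inequality $\abs{\min_a f_1(a)-\min_a f_2(a)}\leq\sup_a\abs{f_1(a)-f_2(a)}$, whereas the paper fixes a policy $\pi$, shows each $\hat{\CAL T}_\pi$ is a $\bar\gamma$-contraction, and then passes to the optimal policy — your way of closing that step is, if anything, the more airtight one.
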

\begin{proof}
See \cref{pf:T_ctrct}.
\end{proof}

Now, by \cref{lem:T_ctrct} we see that $v$ defined in \cref{eq:v_sigma} is the unique fixed point of $\hat{\CAL T}$. Thus, the fixed point satisfies $v\in\CAL C_b(\CAL S\times\Theta)$, as $\CAL C_b(\CAL S\times\Theta)$ is complete under the supremum norm, which proves the first part of \cref{thm:mdp_cts} since $v(\cdot,\cdot) = \min_{\pi\in\Pi}J_\infty(\cdot,\cdot;\pi)$.

For the second part, by \cite[Theorem 4]{ruszczynski2010risk} the optimal policy $\pi^*$ exists, and for each $\theta\in\Theta$ it satisfies:
\begin{align}\label{eq:pi}
    \pi^*(s,\theta)\in &\arg\min_{a\in\Gamma(s,\theta)} c(s,a,\theta)+\gamma(\theta) \sigma( v(\cdot,\theta), s, q(s,a,\theta),\theta),\, \forall s \in \CAL S.
\end{align}
Again, \cref{lem:sigma_cts} along with \cref{asm:cts_v} \ref{asm:cts_v:c} and \ref{asm:cts_v:A} imply that $\pi^*(\cdot,\cdot)$ is lower semicontinuous on $\CAL S\times\Theta$ by Berge's Maximum Theorem. This completes the proof of continuity for $\FRK P_\infty$. 

{\bf Case of $\FRK P_T$: } The proof for $\FRK P_T$ is by mathematical induction. Note that \cref{asm:cts_v} and \cref{def:markov} give the conditions required by \cite[Theorem 2]{ruszczynski2010risk}, and thus $\min_{\pi\in\Pi} J(s_0,\theta;\pi)$ can be solved by the DP decomposition:
\begin{align*}
    v_T(s,\theta)=&c_T(s,\theta),\\
    v_t(s,\theta)=&\min_{a\in\Gamma(s,\theta)}c_t(s,a,\theta) + \sigma_t(v_{t+1}(\cdot,\theta), s, q_t(s,a,\theta),\theta),\, t = 0, \ldots, T-1,
\end{align*}
which gives the initial value function $v_0$. Starting from $t=T$, the terminal cost $v_T=c_T$ is jointly continuous and bounded by \cref{asm:cts_v} \ref{asm:cts_v:c}. For the induction step for time $t=0, \ldots, T-1$, suppose that $v_{t+1}\in\CAL C_b(\CAL S\times\Theta)$. Then, by \cref{lem:sigma_cts}, the mapping $\sigma_{q_t,v_{t+1}}$ is continuous and bounded. Thus, \cref{asm:cts_v} implies that $v_t\in\CAL C_b(\CAL S\times\Theta)$ and so $\pi_t^*$ is lower semicontinuous by Berge's Maximum Theorem, which completes the induction step. Taking $v_0(s_0,\theta) = \min_{\pi\in\Pi} J_T(s_0,\theta;\pi)$, we establish continuity for $\FRK P_T$.

\subsection{Proof of Theorem \ref{thm:mono_cts}}\label{pf:thm:mono_cts}
This proof is similar to \cite[Theorem 3]{dutta1994parametric}, except we work with the risk transition mapping $\sigma$ instead of the usual expectation of the value function. 


{\bf Case of $\FRK P_\infty$: }For every $\theta\in\Theta$, \cref{asm:scts_v} and \cref{def:markov} yield the conditions required by \cite[Theorem 4]{ruszczynski2010risk}, which then implies that the optimal value function $v$ solves the DP decomposition \cref{eq:v_sigma}.

Let $\CAL C_{b}^\uparrow(\CAL S\times\Theta)$ be the set of bounded and continuous functions on $\CAL S\times\Theta$ that are monotonically increasing on $\CAL S$. We will show that for any $\tilde v\in\CAL C_b^\uparrow(\CAL S\times\Theta)$, the mapping $\tilde{\CAL T}:\CAL C_b^\uparrow(\CAL S\times\Theta)\to\CAL C_b^\uparrow(\CAL S\times\Theta)$, defined by
\begin{align}\label{eq:T_tilde}
    \tilde{\CAL T}(\tilde v)(s,\theta) :=& \min_{a\in\Gamma(s,\theta)} c(s,a,\theta)+\gamma(\theta)\sigma(\tilde v(\cdot,\theta),s,q(s,a,\theta),\theta),\, \forall (s, \theta) \in \CAL S \times \Theta,
\end{align}
has the value function $v$ defined in \cref{eq:v_sigma} as its unique fixed point.

\begin{lemma}\label{lem:sigma_mb}
The mapping:
\begin{align*}
    \sigma_{q,\tilde v}:(s,a,\theta)\mapsto \sigma(\tilde v(\cdot,\theta),s,q(s,a,\theta),\theta),
\end{align*}
is continuous on $\CAL S\times\CAL A$ for every $\theta\in\Theta$ and continuous on $\CAL A\times\Theta$ for every $s\in\CAL S$.
\end{lemma}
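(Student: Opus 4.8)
The plan is to read off continuity directly from the variational representation
$\sigma_{q,\tilde v}(s,a,\theta)=\sup_{\phi\in\Phi(s,q(s,a,\theta),\theta)}\inner{\tilde v(\cdot,\theta),\phi}$ in \cref{eq:sigma} and invoke Berge's Maximum Theorem, exactly as in \cref{lem:sigma_cts}. The only structural difference is that \cref{asm:scts_v} supplies separate rather than joint continuity of $q$, so I would apply Berge twice: once on $\CAL S\times\CAL A$ with $\theta$ held fixed, and once on $\CAL A\times\Theta$ with $s$ held fixed.

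For the first restriction, fix $\theta$. I would first argue that the constraint correspondence $(s,a)\mapsto\Phi(s,q(s,a,\theta),\theta)$ is continuous and compact-valued: by \cref{asm:scts_v} \ref{asm:scts_v:q} the map $(s,a)\mapsto q(s,a,\theta)$ is setwise continuous into $\CAL M(\CAL S)$, and composing with the assumed joint continuity of $\Phi$ on $\CAL S\times\CAL M(\CAL S)\times\Theta$ preserves both upper and lower hemicontinuity, while compactness carries over as in \cref{lem:sigma_cts}. The objective $\phi\mapsto\inner{\tilde v(\cdot,\theta),\phi}$ is independent of $(s,a)$ and weak* continuous, since $\tilde v(\cdot,\theta)\in\CAL V_{\CAL S}$ is a fixed test function in the pairing. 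Berge's theorem then delivers continuity of $(s,a)\mapsto\sigma_{q,\tilde v}(s,a,\theta)$ on $\CAL S\times\CAL A$.

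For the second restriction, fix $s$. The constraint correspondence $(a,\theta)\mapsto\Phi(s,q(s,a,\theta),\theta)$ is continuous and compact-valued by the same reasoning, now using the second clause of \cref{asm:scts_v} \ref{asm:scts_v:q}. The subtlety is that the objective $(\phi,a,\theta)\mapsto\inner{\tilde v(\cdot,\theta),\phi}$ now also varies with $\theta$ through the integrand, so I must verify its joint continuity. Given $\theta_n\to\theta$ and $\phi_n\ws\phi$, I would split
\begin{align*}
    \inner{\tilde v(\cdot,\theta_n),\phi_n}-\inner{\tilde v(\cdot,\theta),\phi}=\inner{\tilde v(\cdot,\theta_n)-\tilde v(\cdot,\theta),\phi_n}+\inner{\tilde v(\cdot,\theta),\phi_n-\phi}.
\end{align*}
The second term vanishes by weak* convergence against the fixed function $\tilde v(\cdot,\theta)$. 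The first term is controlled by a H\"older bound $\abs{\inner{\tilde v(\cdot,\theta_n)-\tilde v(\cdot,\theta),\phi_n}}\leq\norm{\tilde v(\cdot,\theta_n)-\tilde v(\cdot,\theta)}_p\,\norm{\phi_n}_{*}$, in which $\norm{\phi_n}_{*}$ is uniformly bounded (weak* convergent sequences are norm bounded), and $\norm{\tilde v(\cdot,\theta_n)-\tilde v(\cdot,\theta)}_p\to 0$ because $\tilde v\in\CAL C_b^\uparrow(\CAL S\times\Theta)$ is bounded and jointly continuous, so $\tilde v(\cdot,\theta_n)\cp\tilde v(\cdot,\theta)$ by dominated convergence against the probability measure $\BBM Q$. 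Berge's theorem then gives continuity of $(a,\theta)\mapsto\sigma_{q,\tilde v}(s,a,\theta)$ on $\CAL A\times\Theta$.

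The main obstacle is precisely this last joint-continuity check, where the integrand $\tilde v(\cdot,\theta)$ and the density $\phi$ move simultaneously: one must combine strong ($\CAL L_p$) convergence of $\tilde v(\cdot,\theta_n)$ --- obtained from dominated convergence --- with weak* convergence of $\phi_n$ and a uniform norm bound, which is the standard device for passing to the limit in a bilinear pairing. Everything else reduces to a routine composition of continuous maps and correspondences feeding into Berge's Maximum Theorem.
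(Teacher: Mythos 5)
Your proposal is correct and takes essentially the same approach as the paper: the paper's own proof of \cref{lem:sigma_mb} is precisely the two-slice argument you describe---apply the Berge-type argument of \cref{lem:sigma_cts} once with $\theta$ fixed to get continuity on $\CAL S\times\CAL A$, and once with $s$ fixed to get continuity on $\CAL A\times\Theta$---with the details omitted ``for brevity.'' Your extra verification that the pairing $(\phi,\theta)\mapsto\inner{\tilde v(\cdot,\theta),\phi}$ is jointly continuous (H\"older bound, norm boundedness of weak* convergent sequences, and dominated convergence giving $\tilde v(\cdot,\theta_n)\cp\tilde v(\cdot,\theta)$) simply fills in details the paper leaves implicit.
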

\begin{proof}
The proof is similar to the proof of \cref{lem:sigma_cts}. By fixing $\theta\in\Theta$, we can prove that $\sigma_{q,\tilde v}$ is jointly continuous on $\CAL S\times\CAL A$, and by fixing $s\in\CAL S$, we can prove that $\sigma_{q,\tilde v}$ is jointly continuous on $\CAL A\times\Theta$. We omit the details here for brevity.
\end{proof}

Now, for any $t\in\Na$, $(a,\theta)\in\CAL A\times\Theta$, and $s_t,s_t'\in\CAL S$, let $S_{t+1}\sim q(s_t,a,\theta)$ and $S_{t+1}'\sim q(s_t',a,\theta)$. Then $s_t\leq s_t'$ implies $S_{t+1}\preceq S_{t+1}'$ according to \cref{asm:mono} \ref{asm:mono:q}. Thus, \cref{eq:rho_sigma} yields
\begin{align*}
    \sigma(\tilde v(\cdot,\theta),s_t,q(s_t,a,\theta),\theta)=&\rho_{\theta,t}(\tilde v(S_{t+1},\theta)) \nonumber\\
    \stackrel{(a)}{\leq}& \rho_{\theta,t}(\tilde v(S_{t+1}',\theta)) = \sigma(\tilde v(\cdot,\theta),s_t',q(s_t',a,\theta),\theta),
\end{align*}
where inequality (a) holds because $\tilde v\in\CAL C_b^\uparrow(\CAL S\times\Theta)$ is monotone in $s$, and the coherent risk measure $\rho_{\theta,t}$ preserves the first stochastic order by \cite[Lemma 5.1]{ruszczynski2006optimization}. This directly implies that the mapping $\sigma_{q,\tilde v}$ is also monotone on $\CAL S$. Combined with \cref{lem:sigma_mb} we have that $\sigma_{q,\tilde v}$ is jointly continuous on $\CAL S\times\CAL A\times\Theta$ as a result of \cite[Lemma 2]{dutta1994parametric}. Then, applying Berge's Maximum Theorem to \cref{eq:T_tilde} we obtain $\tilde{\CAL T}(\tilde v)\in\CAL C_b(\CAL S\times\Theta)$. 

Furthermore, for any $s,s'\in\CAL S$ with $s\leq s'$, we have
\begin{align*}
    &\min_{a\in\Gamma(s,\theta)} c(s,a,\theta)+\gamma(\theta)\sigma(\tilde v(\cdot,\theta),s,q(s,a,\theta),\theta)\\
    & \qquad \leq \min_{a\in\Gamma(s',\theta)} c(s,a,\theta)+\gamma(\theta)\sigma(\tilde v(\cdot,\theta),s,q(s,a,\theta),\theta)\\
    &\qquad \leq \min_{a\in\Gamma(s',\theta)} c(s',a,\theta)+\gamma(\theta)\sigma(\tilde v(\cdot,\theta),s',q(s',a,\theta),\theta),
\end{align*}
by \cref{asm:mono} and the monotonicity of $\sigma_{q,\tilde v}$. It follows that $\tilde{\CAL T}(\tilde v)$ is also monotone on $\CAL S$, and so $\tilde{\CAL T}(\tilde v)\in\CAL C_b^\uparrow(\CAL S\times\Theta)$. Therefore, $\tilde{\CAL T}$ is a contraction on $\CAL C_b^\uparrow(\CAL S\times\Theta)$ with $v$ as its fixed point by reasoning similarly to \cref{lem:T_ctrct}. 
Consequently, using Berge's Maximum Theorem again we conclude that $\pi^*$ is lower semicontinuous. 

{\bf Case of $\FRK P_T$: }The finite-horizon MDP can be proven analogously with mathematical induction, we omit the details here for brevity.

\subsection{Proof of Theorem \ref{thm:mdp_lip}}\label{pf:mdp_lip}
We apply the results of \cite{hinderer2005lipschitz} to establish Lipschitz continuity of the value functions for both $\FRK P_\infty$ and $\FRK P_T$.

{\bf Case of $\FRK P_\infty$: }\cref{asm:lip} implies \cref{asm:cts_v}, which in turn implies $v(s_0,\theta)=\min_{\pi\in\Pi}J_\infty(s_0,\theta;\pi)$ can be computed by \cref{eq:v_sigma}. We recall the mapping $\hat{\CAL T}$ defined in \cref{eq:T_hat}, and we restrict its domain to $\CAL C_{L_0}(\CAL S\times\Theta)$ for some $L_0<\infty$. We can then apply the results of \cite{hinderer2005lipschitz} to show that its fixed point $v=\hat{\CAL T}(v)$ is Lipschitz continuous. This argument requires the following auxiliary lemmas.

\begin{lemma}\label{lem:rho_lip}
The mapping $(s,\theta)\mapsto\rho_{\theta,t}(\bar v(s,\theta))$ is $L_0 L_\rho$-Lipschitz continuous on $\CAL S\times\Theta$ for any $\bar v\in\CAL C_{L_0}(\CAL S\times\Theta)$.
\end{lemma}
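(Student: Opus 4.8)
The plan is to prove \cref{lem:rho_lip} by directly estimating the difference $\abs{\rho_{\theta,t}(\bar v(s,\theta)) - \rho_{\theta',t}(\bar v(s',\theta'))}$ for two points $(s,\theta), (s',\theta') \in \CAL S \times \Theta$, and bounding it by $L_0 L_\rho \big(d_{\CAL S}(s,s') + d_\Theta(\theta,\theta')\big)$. The key is to exploit the dual/robust representation \cref{eq:rho_inner}--\cref{eqn:Phirtm} of the coherent risk measure, which expresses $\rho_{\theta,t}$ as a supremum over the risk envelope, combined with the Lipschitz continuity of the risk envelope (recast via the correspondence $\Psi_t$ from \cref{eqn:Psi}) assumed in \cref{asm:lip}\ref{asm:lip:rho}.

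\medskip
\noindent
First I would decompose the difference into two contributions using a triangle-inequality argument:
\begin{align*}
    \big|\rho_{\theta,t}(\bar v(s,\theta)) - \rho_{\theta',t}(\bar v(s',\theta'))\big|
    &\leq \big|\rho_{\theta,t}(\bar v(s,\theta)) - \rho_{\theta,t}(\bar v(s',\theta'))\big| \\
    &\quad + \big|\rho_{\theta,t}(\bar v(s',\theta')) - \rho_{\theta',t}(\bar v(s',\theta'))\big|.
\end{align*}
The \emph{first} term holds the risk measure fixed and varies only the argument; since $\bar v \in \CAL C_{L_0}(\CAL S\times\Theta)$ is $L_0$-Lipschitz and $\rho_{\theta,t}$ is $L_{\rho_{\theta,t}}$-Lipschitz (as a map on $\CAL V_{\CAL S}$, per the discussion preceding \cref{eqn:Psi} with $L_{\Theta,t} < \infty$), this term is controlled by $L_{\Theta,t} \cdot L_0 \cdot d_{\CAL S \times \Theta}((s,\theta),(s',\theta'))$. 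The \emph{second} term holds the argument fixed and varies the underlying risk envelope/measure as $(s,\theta)$ drifts; here I would write both risk measures in the supremum form \cref{eqn:Phirtm}, so that the difference becomes the gap between two suprema of the linear functional $\inner{\bar v(\cdot,\theta'),\cdot}$ over the two sets $\Psi_t(s,\cdot,\theta)$ and $\Psi_t(s',\cdot,\theta')$. The standard bound $\abs{\sup_{\psi \in U}\inner{f,\psi} - \sup_{\psi' \in U'}\inner{f,\psi'}} \leq \mathrm{Lip}(f)\, d_H(U,U')$ then applies, and by \cref{asm:lip}\ref{asm:lip:rho} the Hausdorff/Wasserstein distance $d_W(\Psi_t(s,a,\theta), \Psi_t(s',a,\theta'))$ is bounded by $L_{\rho,t}\big(d_{\CAL S}(s,s') + d_\Theta(\theta,\theta')\big)$, yielding a bound proportional to $L_0 L_{\rho,t}$. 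Combining the two contributions and absorbing constants into an effective $L_\rho$ gives the claimed $L_0 L_\rho$ coefficient.

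\medskip
\noindent
The main obstacle I anticipate is the bookkeeping at the interface between the two dual representations. The envelope $\Phi_t$ lives in $\CAL L_p^*(\CAL S,\CAL B(\CAL S),\BBM Q_t)$ (densities against a reference measure), whereas $\Psi_t$ reparametrizes these as actual measures $\psi(ds') = \phi(s') q(ds'|s,a,\theta)$ on $\CAL M_{W_1}(\CAL S)$; I must verify that the supremum $\sup_{\phi \in \Phi_t}\inner{\bar v,\phi}$ equals $\sup_{\psi \in \Psi_t}\int \bar v\, d\psi$ so that the Wasserstein-Lipschitz hypothesis on $\Psi_t$ can be legitimately invoked. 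The reason the Wasserstein metric is the right choice is precisely that the Kantorovich--Rubinstein duality bounds $\abs{\int \bar v\, d\psi - \int \bar v\, d\psi'}$ by $\mathrm{Lip}(\bar v)\, W_1(\psi,\psi')$ whenever $\bar v$ is Lipschitz, which is exactly the regularity $\bar v \in \CAL C_{L_0}(\CAL S\times\Theta)$ supplies. I would also need to confirm that the compactness of $\Psi_t$ assumed in \cref{asm:lip}\ref{asm:lip:rho} ensures the suprema are attained (or at least that the $\sup$-difference inequality is valid), and keep the dependence on the fixed action $a$ transparent throughout, since $\rho_{\theta,t}(\bar v(\cdot,\theta))$ in the statement suppresses the state-action conditioning that \cref{eq:rho_sigma} makes explicit. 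Everything else is a routine assembly of these two Lipschitz estimates.
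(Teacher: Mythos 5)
Your proposal is correct, and it uses the same two ingredients as the paper's proof: (i) the uniform (in $\theta$) Lipschitz continuity of $\rho_{\theta,t}$ in its argument, i.e., $L_{\Theta,t}<\infty$, and (ii) \cref{asm:lip} \ref{asm:lip:rho}, the $W_1$-Hausdorff Lipschitz continuity of $\Psi_t$. The difference is in the assembly. The paper outsources both steps to citations: it invokes Hinderer's Remark 2 to pass from separate, uniform Lipschitz continuity to joint Lipschitz continuity of $(X,\theta)\mapsto\rho_{\theta,t}(X)$ on $\CAL V_{\CAL S\times\Theta}\times\Theta$, and then a composition lemma to conclude. You instead estimate the target difference directly, via a triangle inequality plus the sup-difference/Hausdorff bound and Kantorovich--Rubinstein duality; this is in fact essentially the same argument the paper itself writes out for the companion \cref{lem:sigma_lip}. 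Your route is arguably more careful on one point: the paper's intermediate claim of joint Lipschitz continuity for \emph{arbitrary} $X\in\CAL V_{\CAL S\times\Theta}$ is delicate, since the Wasserstein hypothesis on $\Psi_t$ only controls differences of suprema of integrals of \emph{Lipschitz} integrands; your estimate never needs that generality, because the only integrand you feed into the envelope bound is the $L_0$-Lipschitz function $\bar v(\cdot,\theta')$. Two caveats, both shared with the paper: the constant you actually obtain is $L_0(L_{\Theta,t}+L_{\rho,t})$, so reaching the stated $L_0L_\rho$ requires the same identification/absorption of constants that the paper performs silently; and, as you rightly flag, the notation $\rho_{\theta,t}(\bar v(s,\theta))$ hides the state-action conditioning of \cref{eq:rho_sigma}, so in your displayed decomposition the $s$-variation really belongs to the envelope term (via $\Psi_t(s,a,\theta)$), not to the argument of the risk measure; your prose accounts for this correctly even though the displayed triangle inequality does not.
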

\begin{proof}
See \cref{pf:rho_lip}.
\end{proof}

\begin{lemma}\label{lem:sigma_lip}
The mapping $\sigma_{q,\bar v}$ defined in \cref{eq:sigma_qv} is $L_0 (1+L_\rho)$-Lipschitz continuous for any $\bar v\in\CAL C_{L_0}(\CAL S\times\Theta)$. 
\end{lemma}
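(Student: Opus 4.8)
The plan is to express $\sigma_{q,\bar v}$ as a supremum of integrals of $\bar v$ against the tilted probability measures in the correspondence $\Psi$ of \eqref{eqn:Psi}, and then to separate the two sources of variation — the parameter inside the integrand and the measure set itself. By the representation \eqref{eq:sigma} together with \eqref{eqn:Psi}, for each $\phi\in\Phi(s,q(s,a,\theta),\theta)$ the tilted measure $\psi(du)=\phi(u)\,q(du|s,a,\theta)$ is a genuine probability measure on $\CAL S$ (coherence of the risk measure forces $\int\phi\,dq=1$), so that \eqref{eq:sigma_qv} becomes
\begin{equation*}
    \sigma_{q,\bar v}(s,a,\theta)=\sup_{\psi\in\Psi(s,a,\theta)}\int_{\CAL S}\bar v(u,\theta)\,\psi(du).
\end{equation*}
Writing $x=(s,a,\theta)$ and $x'=(s',a',\theta')$, I would insert the bridge quantity $\sup_{\psi\in\Psi(x)}\int\bar v(\cdot,\theta')\,d\psi$ and split $\sigma_{q,\bar v}(x)-\sigma_{q,\bar v}(x')$ into a term $(\mathrm{I})$ that changes only the integrand's parameter from $\theta$ to $\theta'$ over the fixed envelope $\Psi(x)$, and a term $(\mathrm{II})$ that changes only the envelope from $\Psi(x)$ to $\Psi(x')$ while keeping the integrand $\bar v(\cdot,\theta')$ fixed.

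For $(\mathrm{I})$, two suprema over a common set differ by at most the supremum of the pointwise difference of their integrands, so $|(\mathrm{I})|\le\sup_{\psi\in\Psi(x)}\int_{\CAL S}|\bar v(u,\theta)-\bar v(u,\theta')|\,\psi(du)$. Joint $L_0$-Lipschitz continuity of $\bar v$ gives $|\bar v(u,\theta)-\bar v(u,\theta')|\le L_0\,d_\Theta(\theta,\theta')$ for every $u$, and since each $\psi$ is a probability measure this collapses to $|(\mathrm{I})|\le L_0\,d_\Theta(\theta,\theta')$.

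The substantive step is $(\mathrm{II})$. Here I would use that $\bar v(\cdot,\theta')$ is $L_0$-Lipschitz on $\CAL S$, so Kantorovich--Rubinstein duality gives $\big|\int\bar v(\cdot,\theta')\,d\psi-\int\bar v(\cdot,\theta')\,d\psi'\big|\le L_0\,W_1(\psi,\psi')$ for any $\psi,\psi'\in\CAL M_{W_1}(\CAL S)$; this is exactly where the fact that $\Psi$ takes values in the finite-first-moment space $\CAL M_{W_1}(\CAL S)$ is needed, both for the integrals to be finite and for duality to apply. Given $\psi\in\Psi(x)$, the definition of the Hausdorff metric $d_W$ lets me pick $\psi'\in\Psi(x')$ with $W_1(\psi,\psi')$ within an arbitrary $\varepsilon$ of $d_{\CAL U}(\psi,\Psi(x'))\le d_W(\Psi(x),\Psi(x'))$; comparing the two integrals, taking the supremum over $\psi\in\Psi(x)$, and then symmetrizing in $x,x'$ yields $|(\mathrm{II})|\le L_0\,d_W(\Psi(x),\Psi(x'))$. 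The Lipschitz bound on $\Psi$ from \cref{asm:lip} \ref{asm:lip:rho} then gives $|(\mathrm{II})|\le L_0 L_\rho\big(d_{\CAL S}(s,s')+d_{\CAL A}(a,a')+d_\Theta(\theta,\theta')\big)$.

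Adding the two estimates produces
\begin{equation*}
    |\sigma_{q,\bar v}(x)-\sigma_{q,\bar v}(x')|\le L_0 L_\rho\,d_{\CAL S}(s,s')+L_0 L_\rho\,d_{\CAL A}(a,a')+L_0(1+L_\rho)\,d_\Theta(\theta,\theta'),
\end{equation*}
and since each coefficient is at most $L_0(1+L_\rho)$, the right-hand side is bounded by $L_0(1+L_\rho)\big(d_{\CAL S}(s,s')+d_{\CAL A}(a,a')+d_\Theta(\theta,\theta')\big)$, which is the asserted $L_0(1+L_\rho)$-Lipschitz continuity in the product metric. I expect the main obstacle to be $(\mathrm{II})$: one must justify the Kantorovich--Rubinstein estimate on the possibly non-compact Borel space $\CAL S$ and carefully pass from the Hausdorff distance between the envelopes to the difference of the two suprema, whereas the rest of the argument is routine.
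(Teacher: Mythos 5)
Your proof is correct and follows essentially the same route as the paper: both represent $\sigma_{q,\bar v}$ as a supremum of integrals over the correspondence $\Psi$, use the $W_1$-Lipschitz estimate for $(\psi,\theta)\mapsto\int\bar v(\tilde s,\theta)\psi(d\tilde s)$, and then invoke the Hausdorff--Lipschitz bound on $\Psi$ from \cref{asm:lip}\ref{asm:lip:rho}. The only difference is that where you carry out the parametric-optimization step by hand (the $\varepsilon$-selection from the Hausdorff metric and symmetrization), the paper delegates exactly that step to \cite[Lemma 3.2]{hinderer2005lipschitz}.
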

\begin{proof}
See \cref{pf:sigma_lip}.
\end{proof}

As a consequence of \cref{lem:sigma_lip} with \cref{asm:lip} parts \ref{asm:lip:A}, \ref{asm:lip:c}, and \ref{asm:lip:gamma}, we establish that the fixed point $v=\hat{\CAL T}(v)$ is $L_\infty$-Lipschitz continuous by \cite[Theorem 4.1]{hinderer2005lipschitz}, where
\begin{align*}
    L_\infty = \frac{L_c(1+L_D)}{1-\bar\gamma (1+L_\rho)(1+L_D) }.
\end{align*}
This completes the first part of the proof.

{\bf Case of $\FRK P_T$: }Lipschitz continuity for the value functions of the finite-horizon problem is again proven by mathematical induction. First note that the terminal cost $v_T=c_T$ is $L_{c_T}$-Lipschitz continuous. For the induction step for time $t=0,\ldots,T-1$, assume $v_{t+1}$ is $L_{v_{t+1}}$-Lipschitz continuous, then \cref{lem:sigma_lip} yields that the mapping $\sigma_{q_t,v_{t+1}}$ is $L_{v_{t+1}}(1+L_{\rho,t})$-Lipschitz continuous. Next, by \cite[Lemma 3.2]{hinderer2005lipschitz}, $v_t$ is $L_{v_t}$-Lipschitz continuous where $L_{v_t}=L_{c_t}(1+L_D)+L_{v_{t+1}}(1+L_{\rho,t})(1+L_D)<\infty$, which completes the induction step. By picking $v_0(\cdot,\cdot) = \min_{\pi\in\Pi}J_T(\cdot,\cdot;\pi)$, we prove the result for $\FRK P_t$.

\section{Continuity of Risk Measures and Risk Transition Mappings}\label{sec:example}
Our main results assume the joint continuity of the risk envelope $\Phi$ on $\CAL S\times\CAL M(\CAL S)\times\Theta$. Under this condition, through an application of Berge's Maximum Theorem, we are able to demonstrate that the risk measure is continuous in its arguments. This technique compels us to ask two questions:

1. Are there general parametric risk measures that are continuous over closed subsets of $\CAL X\times\Theta$?

2. Are there sufficient conditions under which risk transition mappings are continuous?

We devote this section to answering these two questions. First, we focus on deriving classes of parametric risk measures that are continuous under certain assumptions. Then, we identify a sufficient condition on the risk envelope under which it is both upper and lower hemicontinuous (and therefore, a continuous correspondence). A simple application of Berge's Maximum Theorem then yields the desired continuity of the risk transition mapping.

\subsection{Continuity of Risk Measures}



In this section, we identify some examples of risk measures that are continuous on $\CAL X\times\Theta$. We first recall the following useful result regarding convergence in $\CAL L_1$.

\begin{theorem}[\cite{bogachev2007measure}, Theorem 4.5.4]\label{thm:L1conv}
Let $\BBM P$ be a probability measure. Suppose that $f$ is a $\BBM P-$measurable function and $\{f_n\}_{n\in\Na}$ is a sequence of $\BBM P$-integrable functions. Then the following assertions are equivalent:
\begin{enumerate}
    \item The sequence $\{f_n\}_{n\in\Na}$ converges to $f$ in measure and is uniformly integrable.
    \item  The function $f$ is integrable and the sequence $\{f_n\}_{n\in\Na}$ converges to $f$ in $\CAL L^1$.
\end{enumerate}
\end{theorem}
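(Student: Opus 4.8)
The plan is to recognize this as the Vitali convergence theorem and to prove the two implications separately, after recording the characterization of uniform integrability that makes the estimates transparent. On the probability space $(\Omega,\CAL F,\BBM P)$, uniform integrability of $\{f_n\}_{n\in\Na}$ is equivalent to the conjunction of (a) uniformly bounded norms, $\sup_n\norm{f_n}_1<\infty$, and (b) uniform absolute continuity: for every $\eta>0$ there is a $\delta>0$ such that $\pr{A}<\delta$ implies $\sup_n\int_A\abs{f_n}\,\pr{d\omega}<\eta$. I would begin by establishing this equivalence (or simply invoking it), since property (b) is the form actually used below.

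For the implication $(2)\Rightarrow(1)$, I would first deduce convergence in measure directly from Chebyshev's inequality, $\pr{\abs{f_n-f}>\epsilon}\le\norm{f_n-f}_1/\epsilon\to 0$. To obtain uniform integrability, fix $\epsilon>0$ and choose $N$ with $\norm{f_n-f}_1<\epsilon$ for all $n>N$; the finite family $\{f_1,\dots,f_N,f\}$ of integrable functions is uniformly integrable, and the splitting $\int_A\abs{f_n}\le\int_A\abs{f_n-f}+\int_A\abs{f}\le\epsilon+\int_A\abs{f}$ for $n>N$ transfers the uniform absolute continuity from $f$ to the entire tail, while the initial segment is absorbed by the finite family.

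For the harder implication $(1)\Rightarrow(2)$, I would proceed in two steps. First, I would establish integrability of $f$: since $f_n\to f$ in measure, extract a subsequence converging $\BBM P$-almost everywhere and apply Fatou's lemma to get $\int\abs{f}\,\pr{d\omega}\le\liminf_k\int\abs{f_{n_k}}\,\pr{d\omega}\le\sup_n\norm{f_n}_1<\infty$, where finiteness comes from property (a). Second, for the $\CAL L^1$ convergence I would fix $\epsilon>0$, set $A_n:=\{\abs{f_n-f}>\epsilon\}$, and decompose
\begin{align*}
\int\abs{f_n-f}\,\pr{d\omega}\le\epsilon+\int_{A_n}\abs{f_n}\,\pr{d\omega}+\int_{A_n}\abs{f}\,\pr{d\omega}.
\end{align*}
Convergence in measure gives $\pr{A_n}\to 0$, so $\pr{A_n}<\delta$ for large $n$; the uniform absolute continuity of $\{f_n\}$ controls the middle term and the (now justified) absolute continuity of the integral of $f$ controls the last term, each by an arbitrarily small constant. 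Sending the constants and then $\epsilon$ to zero yields $\norm{f_n-f}_1\to 0$.

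The main obstacle is the middle term $\int_{A_n}\abs{f_n}\,\pr{d\omega}$ in the final decomposition: because the exceptional set $A_n$ varies with $n$, there is no single dominating function and the dominated convergence theorem does not apply directly, so the uniform-in-$n$ absolute continuity supplied by uniform integrability is the only available leverage; indeed, the entire theorem is engineered precisely to provide this control. Securing integrability of $f$ beforehand, via the subsequence-plus-Fatou argument rather than a direct estimate, is the necessary preliminary that makes the phrase ``convergence to $f$ in $\CAL L^1$'' meaningful.
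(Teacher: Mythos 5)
Your proof is correct, but there is nothing in the paper to compare it against: the paper does not prove this statement at all. It is imported verbatim by citation from Bogachev's \emph{Measure Theory} (Theorem 4.5.4) and used as a black box (e.g., in Example \ref{exm:inf} to get $\ex{X_n}\to\ex{X}$ from $X_n\cp X$). What you have written is the standard proof of the Vitali convergence theorem, and it is sound: the easy direction via Chebyshev plus the finite-family/tail splitting, and the hard direction via the Riesz subsequence theorem with Fatou's lemma to secure integrability of $f$, followed by the three-term decomposition over $A_n=\{\abs{f_n-f}>\epsilon\}$ and its complement, with the middle term $\int_{A_n}\abs{f_n}\,\pr{d\omega}$ handled by uniform absolute continuity. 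Two ingredients you invoke deserve the explicit justification you partly sketch: the equivalence, on a probability space, of uniform integrability with ($\CAL L^1$-boundedness $+$ uniform absolute continuity) --- note this equivalence uses finiteness of the measure and can fail on infinite measure spaces, which is why the probability-measure hypothesis matters --- and the fact that a single integrable function has an absolutely continuous integral. Both are standard, and your identification of the varying exceptional sets $A_n$ as the reason dominated convergence cannot substitute for uniform integrability is exactly the right diagnosis. In short: a correct, self-contained proof of a result the paper only cites.
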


Note that if $1\leq q<p<\infty$, then $X_n\cp X$ implies $X_n\stackrel{\CAL L_q}{\to}X$ since
\begin{align*}
    \ex{\abs{X_n-X}^{q}}^{\frac{1}{q}}=\left(\ex{\abs{X_n-X}^q}^{\frac{p}{q}}\right)^{\frac{1}{p}}\stackrel{(a)}{\leq} \ex{\abs{X_n-X}^{p}}^{\frac{1}{p}},
\end{align*}
where inequality (a) holds by Jensen's inequality since $f(x)=x^{\frac{p}{q}}$ is convex in $x$. Furthermore, $\{X_n\}_{n\in\Na}\subset\CAL X$ implies $\abs{X_n}^p$ is $\BBM P$-integrable, and so $X_n$ is $\BBM P$-integrable. That is, the conditions required by \cref{thm:L1conv} are fulfilled if we assume $X_n\cp X$.

In all of the examples below, $X$ is a real-valued random variable. In addition, we introduce a function $\lambda:\Theta\to\Re_+$ that expresses the DM's degree of risk-sensitivity (higher values of $\lambda(\theta)$ mean the DM is more risk-sensitive).

\begin{example}[Worst-Loss Risk Measure]\label{exm:inf}
\normalfont 
Consider a risk measure:
\begin{align*}
    \rho_\theta(X):=\ex{X}+\lambda(\theta)\text{ess}\inf X,
\end{align*}
where $\text{ess}\inf$ is the essential infimum of $X$, i.e.,
\begin{align*}
    \text{ess}\inf X=\sup\left\{x\in\Re:\pr{X<x}=0\right\}.
\end{align*}
This $\rho_\theta$ satisfies \cref{def:coherent}. 

Now suppose $\Omega$ is compact and $X$ and the sequence $\{X_n\}_{n\in\Na}$ are continuous on $\Omega$, so $\norm{X}_\infty<\infty$ and we can replace $\text{ess}\inf$ with $\inf$. It follows that
\begin{align*}
    \rho_\theta(X)=\ex{X}+\lambda(\theta)\inf_{\omega\in\Omega} X(\omega),
\end{align*}
is continuous on $\CAL C_b(\Omega)\times\Theta$. This claim is based on the following observations. First, $X_n\cp X$ for $p\geq 1$ implies $X_n\to X$ in $\BBM P$ and that $\{X_n\}_{n\in\Na}$ is $\BBM P$-uniformly integrable by \cref{thm:L1conv}, which directly implies $\ex{X_n}\to\ex{X}$. 
Next, since $\Omega$ is compact and $X$ is continuous, by Berge's Maximum Theorem we have $\inf_{\omega\in\Omega} X_n(\omega)\to \inf_{\omega\in\Omega} X(\omega)$. Then, $\lambda(\theta_n)\inf X_n(\omega)\to \lambda(\theta)\inf X(\omega)$ since $\lambda$ is also continuous. It follows that $\rho_\theta(X)$ is jointly continuous.
\end{example}

\begin{example}[Mean-Deviation of Order $p$]\label{exm:md}
\normalfont
For $p\in[1,\infty)$, define
\begin{align*}
    \rho_\theta(X):=\ex{X}+\lambda(\theta)\norm{X-\ex{X}}_p.
\end{align*}
For $p=2$, this is the standard mean-deviation model introduced in \cite{Markowitz1952portfolio}. In this case, $\rho_\theta$ meets all but the monotonicity requirement of \cref{def:coherent} for $p>1$. However, for $p=1$, if $\BBM P$ is non-atomic, then $\rho_\theta$ is coherent if and only if $\lambda(\theta)\in[0,1/2]$ by \cite[Example 6.19]{shapiro2021lectures}.

We establish continuity for $p=1$ and $\lambda(\theta_n)\in[0,1/2]$. Indeed, by the triangle inequality:
\begin{align}\label{eq:jc:md}
    &0\leq\lim_{n\to\infty}\left(\norm{X_n-\ex{X_n}}_1 -\norm{X-\ex{X}}_1\right)\nonumber\\
    &\qquad\leq \lim_{n\to\infty}\left(\norm{X_n-X}_1 +\abs{\ex{X_n}-\ex{X}}\right) \stackrel{(a)}{=}0,
\end{align}
where equality (a) holds since: (i) $\lim_{n\to\infty}\norm{X_n-X}_1=0$ by $X_n\stackrel{\CAL L_1}{\to} X$, and (ii) $\lim_{n\to\infty}\abs{\ex{X_n}-\ex{X}}=0$ as we have shown in \cref{exm:inf}. Then it follows that
\begin{align*}
    \lambda(\theta_n)\norm{X_n-\ex{X_n}}_1\to\lambda(\theta)\norm{X-\ex{X}}_1,
\end{align*}
which implies $\rho_\theta(X)$ is jointly continuous.
\end{example}

\begin{example}[Mean-Upper-Semideviation of Order $p$]
\normalfont
Recall
\begin{align*}
    \rho_\theta(X):=\ex{X}+ \lambda(\theta)\norm{(X-\ex{X})_+}_p,
\end{align*}
is the mean-upper-semideviation, which is a coherent risk measure.
In contrast to the mean-deviation, $\rho_\theta$ is monotonic if $\lambda(\theta)\in[0,1]$ and $\BBM P$ is non-atomic \cite{ogryczak1999stochastic}.

We show that if $\lambda(\theta)\in[0,1]$, then \begin{align*}
    \rho_\theta(X):=\ex{X}+ \lambda(\theta)\norm{(X-\ex{X})_+}_p,
\end{align*}
is continuous. Indeed, since $\abs{(X_n-\ex{X_n})_+}^p\leq\abs{X_n-\ex{X_n}}^p$ for all $n\in\Na$ and $p\in[1,\infty)$, the Dominated Convergence Theorem implies
\begin{align*}
    \norm{(X_n-\ex{X_n})_+}_p\to\norm{(X - \ex{X})_+}_p, \text{ as } n\to\infty,
\end{align*}
where the convergence $\norm{X_n-\ex{X_n}}_p\to\norm{X - \ex{X}}_p$ is based on a similar argument as \cref{eq:jc:md}. This yields the joint continuity of $\rho_\theta(X)$.
\end{example}

\begin{example}[Certainty Equivalent]
\normalfont
Let $\FRK U_\theta:\Re\to\Re$ be a utility function which is continuous and monotonically increasing (and thus $\FRK U_\theta^{-1}$ exists), for all $\theta\in\Theta$. The corresponding certainty equivalent is $\rho_{\theta}(X) = \FRK U_\theta^{-1}(\ex{\FRK U_\theta(X)})$. For general $\FRK U_\theta$, the certainty equivalent is not coherent as it may fail to satisfy positive homogeneity and convexity. However, for the exponential utility function $\FRK U_\theta(x)=\exp(\lambda(\theta) x)/\lambda(\theta)$, the homogenization procedure \cite{shapiro2021lectures} produces the following coherent risk measure:
\begin{align}\label{eq:rho_util}
    \rho_\theta(X)=\inf_{\tau>0} \tau\, \FRK U_\theta^{-1} \left(\ex{\FRK U_\theta\left(\frac{X}{\tau}\right)}\right) \text{ is coherent}.
\end{align}

Suppose $\FRK U_\theta(X)$ is jointly continuous on $\CAL V_{\CAL S\times\Theta}\times\Theta$. If we further assume that the homogenization given in \cref{eq:rho_util} satisfies
\begin{align*}
    \lim_{\tau\downarrow 0}\tau\FRK U_\theta^{-1} \ex{\FRK U_\theta(X/\tau)} =\infty,
\end{align*}
then $X \to \tau\FRK U_\theta^{-1} \ex{\FRK U_\theta(X/\tau)}$ is $\BBM K\BBM N$-inf-compact on the graph $\{(\tau,X,\theta)\in\Re\times\CAL V_{\CAL S\times\Theta}\times\Theta: \tau> 0\}$ by \cite[Definition 1.3]{feinberg2014berges}. Then, the generalization of Berge's Maximum Theorem \cite[Theorem 1.4]{feinberg2014berges} yields the joint continuity of $\rho_\theta(X)=\inf_{\tau>0}\tau\FRK U_\theta^{-1} \ex{\FRK U_\theta(X/\tau)}$ on $\CAL V_{\CAL S\times\Theta}\times\Theta$.

\end{example}

\begin{example}[Conditional Value-at-Risk]\label{exm:cvar_cts}
\normalfont
The Value-at-Risk (VaR) at \\ level $u\in[0,1)$ is defined as
\begin{align}\label{eq:VaR}
    \text{VaR}_u(X):=F_X^{-1}(u)=\inf\left\{x:\pr{X\leq x}\geq u\right\},
\end{align}
where $F_X^{-1}$ is also referred to as the inverse CDF or quantile function. It is well known that VaR does not satisfy the sub-additivity property (which is implied by convexity and positive homogeneity). A common alternative to VaR is the CVaR (at level $\lambda(\theta)\in[0,1)$), which is defined as:
\begin{align*}
    \text{CVaR}_{\lambda(\theta)}(X):=\frac{1}{1-\lambda(\theta)}\int_{\lambda(\theta)}^1 \text{VaR}_u(X) du,
\end{align*}
and we let $\rho_\theta(X)=\text{CVaR}_{\lambda(\theta)}(X)$.

Suppose the CDF $F_{X_n}$ of $X_n$ is continuous for all $n \in \Na$, and all $X_n:\Omega\to[0,\infty)$. We will show that $\text{CVaR}_{\lambda(\theta)}(X)$ is jointly continuous on $\CAL V_{\CAL S\times\Theta}\times\Theta$ by the Dominated Convergence Theorem in \cref{pf:cvar_cts}. 
\end{example}

\subsection{Continuity of Risk Transition Mapping}
We now address the question of establishing the continuity of the risk transition mapping defined in \eqref{eqn:Phirtm}. Towards this end, define $\CAL H$ to be the set of sequentially continuous functions:
\begin{align*}
    \CAL H &= \Big\{h:\CAL S\times \CAL L_p^*(\CAL S,\CAL B(\CAL S),\mathbb{Q}) \times \Theta\to \CAL L_p^*(\CAL S,\CAL B(\CAL S),\mathbb{Q}): h \text{ is sequentially continuous}\Big\}.
\end{align*}
For a transition kernel $q(s,a,\theta)\in\CAL M(\CAL S)$, we identify it by its Radon-Nikodym derivative $m\in\CAL L_p^*(\CAL S,\CAL B(\CAL S),\BBM Q)$ with respect to $\BBM Q$. We use $M$ to denote this mapping, that is, $m(s') = M(q(s,a,\theta))(s')\in \CAL L_p^*(\CAL S,\CAL B(\CAL S),\mathbb{Q})$. 
We make the following assumption to proceed.

\begin{assumption}\label{asm:phi_cts}
Let $\CAL H' \subset\CAL H$ be any subset. There exists $\bar\phi\in \CAL L_p^*(\CAL S,\CAL B(\CAL S),\mathbb{Q})$ satisfying $\int \bar\phi(s') \mathbb{Q}(ds')<\infty$ such that $\Phi(s,q(s,a,\theta),\theta)$ is given by
\begin{align}\label{eq:Phi_smth}
    \Phi(s,q(s,a,\theta),\theta) &= \Big\{\phi\in \CAL L_p^*(\CAL S,\CAL B(\CAL S),\mathbb{Q}): \phi = h(s,m,\theta)m, h\in\CAL H' \subset\CAL H,\nonumber\\
    & \qquad m = M(q(s,a,\theta)), \; 0\leq \phi(s')\leq \bar \phi(s')\Big\}.
\end{align}
\end{assumption}

\begin{lemma}\label{lem:phi_cts}
Suppose that the Assumption \ref{asm:phi_cts} holds. Then, the correspondence $\Phi:\CAL S\times\CAL L_p^*(\CAL S,\CAL B(\CAL S),\mathbb{Q})\times\Theta \rightrightarrows \CAL L_p^*(\CAL S,\CAL B(\CAL S),\BBM Q)$ defined in \eqref{eqn:Phirtm} is a continuous correspondence.
\end{lemma}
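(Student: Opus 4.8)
The plan is to verify both upper and lower hemicontinuity of $\Phi$ directly from the structural representation \eqref{eq:Phi_smth} granted by \cref{asm:phi_cts}, exploiting the uniform domination $0\le\phi(s')\le\bar\phi(s')$ together with the sequential continuity of the maps in $\CAL H'$. Throughout I would fix a convergent sequence $(s_n,m_n,\theta_n)\to(s,m,\theta)$ in $\CAL S\times\CAL L_p^*(\CAL S,\CAL B(\CAL S),\mathbb Q)\times\Theta$, where $m_n=M(q(s_n,a_n,\theta_n))$, and argue along this sequence since the ambient spaces are metric (so sequential hemicontinuity coincides with hemicontinuity).

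For \emph{upper hemicontinuity}, I would take any $\phi_n\in\Phi(s_n,m_n,\theta_n)$ with $\phi_n\to\phi$ in the weak* topology of $\CAL L_p^*(\CAL S,\CAL B(\CAL S),\mathbb Q)$ and show $\phi\in\Phi(s,m,\theta)$. By \eqref{eq:Phi_smth}, write $\phi_n=h(s_n,m_n,\theta_n)\,m_n$ for some $h\in\CAL H'$. The dominating envelope $\bar\phi$ with $\int\bar\phi\,d\mathbb Q<\infty$ confines all $\phi_n$ to a weak*-compact, order-bounded set, so a limit $\phi$ exists and inherits the bound $0\le\phi\le\bar\phi$. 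The sequential continuity of $h$ gives $h(s_n,m_n,\theta_n)\to h(s,m,\theta)$, and since $m_n\to m$, I would pass to the limit in the product $h(s_n,m_n,\theta_n)m_n$ to identify the limit as $\phi=h(s,m,\theta)\,m$, placing $\phi$ in $\Phi(s,m,\theta)$. The product passage is where domination is essential: I would invoke the dominated convergence argument (the varying-measure version alluded to in \cref{sec:main_res}) to justify that the pointwise/weak* limit of the product equals the product of the limits.

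For \emph{lower hemicontinuity}, I would take any $\phi\in\Phi(s,m,\theta)$, say $\phi=h(s,m,\theta)\,m$ with $h\in\CAL H'$, and construct a recovery sequence $\phi_n\in\Phi(s_n,m_n,\theta_n)$ with $\phi_n\to\phi$. The natural candidate is $\phi_n:=h(s_n,m_n,\theta_n)\,m_n$, using the \emph{same} $h$; this automatically lies in $\Phi(s_n,m_n,\theta_n)$ by \eqref{eq:Phi_smth} (after truncating against $\bar\phi$ if necessary to preserve the bound), and its convergence to $\phi$ follows again from sequential continuity of $h$ together with $m_n\to m$. Here the fact that $\CAL H'$ is a fixed set of functions—so the same $h$ may be reused across the whole sequence—is exactly what makes the recovery sequence available, and the envelope $\bar\phi$ guarantees the truncation does not perturb the limit.

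The main obstacle I anticipate is the limit-passage in the product $h(\cdot)m_n$ under the weak* topology: weak* convergence of $m_n$ does not by itself survive multiplication by a varying continuous factor without a uniform integrability or domination hypothesis, which is precisely why \cref{asm:phi_cts} supplies the integrable envelope $\bar\phi$. I would therefore devote the core of the argument to making this product-convergence rigorous via dominated convergence, after which both hemicontinuity claims close routinely, and the correspondence $\Phi$ is continuous as asserted.
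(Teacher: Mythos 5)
Your proposal follows essentially the same route as the paper's proof: both verify upper and lower hemicontinuity directly from the representation \cref{eq:Phi_smth}, reuse a single $h\in\CAL H'$ along the sequence, take $\phi_n=h(s_n,m_n,\theta_n)m_n$ as the recovery sequence for lower hemicontinuity, and justify the limit passage in the product $h(s_n,m_n,\theta_n)m_n$ by testing against bounded measurable functions and invoking the Dominated Convergence Theorem with the envelope $\bar\phi$. No substantive difference in strategy.
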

\begin{proof}
See \cref{pf:phi_cts} for the proof. 
\end{proof}
One can now simply apply Berge's Maximum Theorem to show that the risk transition mapping $\sigma_t(\cdot, s,q_t(s,a,\theta),\theta)$ is continuous in $v(\cdot,\theta)$. We note here that in \cite{ruszczynski2014erratum}, the author provides an example of a continuous risk-transition mapping in Example 1. Lemma \ref{lem:phi_cts} generalizes that result and \cite[Example 1]{ruszczynski2014erratum} is a special case of Lemma \ref{lem:phi_cts}.

\section{Conclusion}\label{sec:conclusion}
In this paper, we consider risk-sensitive MDPs based on nested Markov risk measures, as elucidated in \cite{ruszczynski2010risk}.
In our framework, both the system parameters and the DM's risk preferences are encoded through a model parameter that is subject to perturbation.
We examine sufficient conditions for the value functions to be continuous on the parameter space for both finite-horizon and infinite-horizon MDPs. Our first result requires the system model and the risk measure to be jointly continuous over the state, action, and parameter spaces. Then, we relax this assumption to only require separate continuity for monotone MDPs.
In this way, our results generalize the parametric continuity results for risk-neutral MDPs in \cite{dutta1994parametric} to the class of risk-sensitive MDPs.

\appendix

\section{Proof of Lemma \ref{lem:sigma_cts}}\label{pf:sigma_cts}
We prove the claim by applying Berge's Maximum Theorem \cite[Theorem 17.31]{Aliprantis2006} to $\sigma$ defined in \cref{eq:sigma}. For $\hat v\in\CAL C_b(\CAL S\times\Theta)$, we define:
\begin{align}\label{eq:inf_sigma}
    \sigma(\hat v(\cdot,\theta),s,q(s,a,\theta),\theta)=\sup_{\phi\in\Phi(s,q(s,a,\theta),\theta)}\inner{\hat v(\cdot,\theta),\phi}.
\end{align}
Consider the map
\begin{align}\label{eq:inner_vphi}
    (s,a,\theta,\phi)\mapsto \int \hat v(s',\theta)\phi(s')q(ds'|s,a,\theta),
\end{align}
where $\phi\in\CAL P_\CAL S$ and
\begin{align*}
    \CAL P_\CAL S:=\left\{\phi\in\CAL X^*:\int \phi(s)\mathbb{Q}(ds)=1,\;\phi\geq 0\right\}
\end{align*}
is endowed with the weak* topology.

For any sequence $\{\theta_n\}_{n\in\Na}\subset\Theta$ with $\theta_n\to\theta$, since $\hat v$ is jointly continuous on $\CAL S\times\Theta$, we have
\begin{align*}
     \left\{s'\in\CAL S:\lim_{n\to\infty}\hat v(s_n',\theta_n)\neq \hat v(s',\theta),\forall s_n'\to s'\right\}=\emptyset.
\end{align*}
Moreover, since $\phi_n\ws\phi$ and $q(s_n,a_n,\theta_n)\sw q(s,a,\theta)$, we conclude that 
\[\phi_n q(s_n,a_n,\theta_n)\ws \phi \, q(s,a,\theta).\]
Thus, for any $\hat v\in\CAL C_b(\CAL S\times\Theta)$,
\begin{align*}
    \int \hat v(s',\theta_n)\phi_n(s')q(ds'|s_n,a_n,\theta_n)\to \int \hat v(s',\theta)\phi(s')q(ds'|s,a,\theta),
\end{align*}
which establishes that the map \cref{eq:inner_vphi} is continuous by \cite[Theorem 5.5]{billingsley1968convergence}.

Next, by \cite[Proposition 6.2]{ruszczynski2006optimization} we have that $\Phi$ is weakly compact for every $(s,a,\theta)\in\CAL S\times\CAL A\times\Theta$. Then \cref{asm:cts_v} \ref{asm:cts_v:q} and the joint continuity of $\Phi$ imply that the mapping  $(s,a,\theta)\mapsto\Phi(s,q(s,a,\theta),\theta)$ is continuous. Thus, an application of Berge's Maximum Theorem \cite{Aliprantis2006} yields the continuity of $\sigma_{q,\hat v}$ on $\CAL S\times\CAL A\times\Theta$. 


\section{Proof of Lemma \ref{lem:T_ctrct}}\label{pf:T_ctrct}
Fix a policy $\pi\in\Pi$ and define the mapping $\hat{\CAL T}$ by
\begin{align*}
    \hat{\CAL T}_\pi(\hat v)(s,\theta)=c(s,\pi(s,\theta),\theta) +\gamma(\theta)\sigma(\hat v(\cdot,\theta),s,q(s,\pi(s,\theta),\theta),\theta).
\end{align*}
For any $\hat v_1,\hat v_2\in\CAL C_b(\CAL S\times\Theta)$ (endowed with the supremum norm) we have:
\begin{align*}
    &\norm{\hat{\CAL T}_\pi(\hat v_1)-\hat{\CAL T}_\pi(\hat v_2)}_\infty \\
    =& \Big\| c(s,\pi(s,\theta),\theta) +\gamma(\theta)\sigma(\hat v_1(\cdot,\theta),s,q(s,\pi(s,\theta),\theta),\theta)\\
    &\qquad - c(s,\pi(s,\theta),\theta) - \gamma(\theta)\sigma(\hat v_2(\cdot,\theta),s,q(s,\pi(s,\theta),\theta),\theta)\Big\|_\infty \\
    =&\sup_{(s,\theta)\in\CAL S\times\Theta}\Big|\gamma(\theta)\big(\sigma(\hat v_1(\cdot,\theta),s,q(s,\pi(s,\theta),\theta),\theta)-\sigma(\hat v_2(\cdot,\theta),s,q(s,\pi(s,\theta),\theta),\theta)\big)\Big|\\
    = &\sup_{(s,\theta)\in\CAL S\times\Theta}\abs{\gamma(\theta)\sup_{\phi\in\Phi(s,q(s,\pi(s,\theta),\theta),\theta)}\int (\hat v_1(s',\theta)-\hat v_2(s',\theta))\phi(s') \mathbb{Q}(ds')}\\
    \stackrel{(a)}{\leq}& \bar\gamma \sup_{(s,\theta)\in\CAL S\times\Theta}\abs{(\hat v_1(\cdot,\theta)-\hat v_2(\cdot,\theta))\sup_{\phi\in\Phi(s,q(s,\pi(s,\theta),\theta),\theta)}\int  \phi(s') \mathbb{Q}(ds')}\\
    \stackrel{(b)}{\leq}&\bar \gamma\norm{\hat v_1-\hat v_2}_\infty,
\end{align*}
where (a) holds due to the bound on the discount factor $\gamma(\theta)<\bar\gamma$ for every $\theta \in \Theta$, and (b) holds since $\Phi$ is a set of probability density functions. Then, $\hat{\CAL T_\pi}$ is a contraction for any $\pi\in\Pi$. Taking $\pi=\pi^*$ to be the optimal policy defined in \cref{eq:pi}, we conclude that $\hat{\CAL T}$ is also a contraction.

\section{Proof of Lemma \ref{lem:rho_lip}}\label{pf:rho_lip}
We first use \cite[Remark 2]{hinderer2005lipschitz} to show that $\rho_{\theta,t}(X)$ is jointly Lipschitz on $\CAL X\times\Theta$, then the result will hold by composition of Lipschitz continuous functions.

By \cite[Lemma 2.1]{inoue2003worst} and \cite[Corollary 3.1]{ruszczynski2006optimization}, we have that $\rho_{\theta,t}$ is $L_\rho$-Lipschitz continuous on the interior of $\dom{\rho_{\theta,t}}$. Then 
\begin{align*}
    \sup_{\theta\in\Theta}\sup_{X\neq X'}\frac{\norm{\rho_{\theta,t}(X)-\rho_{\theta,t}(X')}_\infty}{\norm{X-X'}_\infty}=\sup_{\theta\in\Theta}L_{\rho_{\theta,t}}\leq L_\rho<\infty,
\end{align*}
which establishes that $\rho_{\theta,t}(X)$ is uniformly Lipschitz continuous on $\Theta$. Then, \cref{asm:lip} \ref{asm:lip:rho} implies that $\rho_{\theta,t}(X)$ is jointly $L_\rho$-Lipschitz continuous on $\CAL V_{\CAL S\times\Theta}\times\Theta$ according to \cite[Remark 2]{hinderer2005lipschitz}.

Finally, since $\bar v\in\CAL C_{L_0}(\CAL S\times\Theta)$, we have that $\rho_{\theta,t}(\bar v(s,\theta))$ is $L_0L_\rho$-Lipschitz continuous on $\CAL S\times\Theta$ by \cite[Lemma 2,1 (b)]{hinderer2005lipschitz}, and the proof is complete.

\section{Proof of Lemma \ref{lem:sigma_lip}}\label{pf:sigma_lip}
Let $\psi,\psi'\in\CAL M(\CAL S)$ and pick any $\theta,\theta'\in\Theta$. Since $\bar v\in\CAL C_{L_0}(\CAL S\times\Theta)$, we have
\begin{align*}
    & \abs{\int \bar v(\tilde s,\theta)\psi(d\tilde s) - \int v(\tilde s,\theta')\psi'(d\tilde s)} \\
    &\qquad \leq \abs{\int \bar  v(\tilde s,\theta)\psi(d\tilde s) - \int \bar v(\tilde s,\theta)\psi'(d\tilde s)}+ \int \abs{ \bar v(\tilde s,\theta) - \bar v(\tilde s,\theta')}\psi'(d\tilde s)\\
    &\qquad \leq L_0 \Big(W_1(\psi,\psi') + d_{\Theta}(\theta,\theta')\Big),
\end{align*}
which shows that $(\psi,\theta)\mapsto\int\bar v(\tilde s,\theta)\psi(d\tilde s)$ is $L_0$-Lipschitz continuous. Due to Assumption \ref{asm:lip:rho} and an application of \cite[Lemma 3.2]{hinderer2005lipschitz}, we have
\begin{align*}
    & \abs{\sup_{\psi\in \Psi_t(s,a,\theta)} \int \bar v(\tilde s,\theta) \psi(d\tilde s)  - \sup_{\psi\in \Psi_t(s',a',\theta')} \int \bar v(\tilde s,\theta') \psi(d\tilde s) } \\
    &\qquad \leq (1+L_{\rho}) L_0 \Big(d_{\CAL S}(s, s')+d_{\CAL A}(a,a') + d_\Theta(\theta,\theta') \Big).
\end{align*}
This directly yields that $\sigma_{q,\bar v}$ is $ L_0(1+L_{\rho})$-Lipschitz continuous for any $\bar v\in\CAL C_{L_0}(\CAL S\times\Theta)$, which completes the proof.

\section{Proof of Lemma \ref{lem:phi_cts}}\label{pf:phi_cts}
First, we prove the upper hemicontinuity of the correspondence $\Phi$. Consider a sequence of triples $\{(s_n,m_n,\theta_n)\}_{n\in\Na}\subset\CAL S\times\CAL L_p^*(\CAL S,\CAL B(\CAL S),\mathbb{Q})\times\Theta$ such that $s_n\to s\in\CAL S$, $m_n\to m\in\CAL L_p^*(\CAL S,\CAL B(\CAL S),\mathbb{Q})$ in the weak* sense, and $\theta_n\to\theta\in\Theta$. For any $\{\phi_n\}_{n\in\Na}$ satisfying $\phi_n\in \Phi(s_n,m_n,\theta_n)$, we have $\phi_n=h(s_n,m_n,\theta_n)m_n$ by \cref{eq:Phi_smth}. In this case, for any bounded measurable function $f:\CAL S\to\Re$, we have
\begin{align}\label{eq:fh_bdd}
    \abs{fh(s_n,m_n,\theta_n)m_n}& \leq \norm{f}_\infty\phi_n \text{ and }
    \int f\bar\phi(s')\BBM Q(ds')&\leq \int \norm{f}_\infty\bar\phi(s')\BBM Q(ds')<\infty.
\end{align}
Therefore, if $\phi_n\to\phi$ in the weak* sense (which yields $\phi_n \BBM Q\sw \phi\, \BBM Q$), then
\begin{align*}
    \int f(s')h(s,m,\theta)m(s')\BBM Q(ds')\stackrel{(a)}{=}&\lim_{n\to\infty}\int f(s')h(s_n,m_n,\theta)m_n(s')\BBM Q(ds')\\
    =&\lim_{n\to\infty}\int f(s') \phi_n(s')\BBM Q(ds')\stackrel{(b)}{=}\int f(s')\phi(s')\BBM Q(ds'),
\end{align*}
where (a) holds due to the Dominated Convergence Theorem, and (b) holds by the definition of setwise convergence. In this case, $\phi=h(s,m,\theta)m$, implying $\phi\in\Phi(s,m,\theta)$ by \cref{eq:Phi_smth}, which shows that $\Phi$ is upper hemicontinuous at $(s,m,\theta)$.

We next show that $\Phi$ is lower hemicontinuous. Again consider a sequence of triples $\{(s_n,m_n,\theta_n)\}_{n\in\Na}\subset\CAL S\times\CAL L_p^*(\CAL S,\CAL B(\CAL S),\mathbb{Q})\times\Theta$ such that: $s_n\to s\in\CAL S$, $m_n\to m\in\CAL L_p^*(\CAL S,\CAL B(\CAL S),\mathbb{Q})$ pointwise, and $\theta_n\to\theta\in\Theta$. For any $\phi\in \Phi(s,m,\theta)$, there exists $h\in\CAL H'$ such that $\phi=h(s,m,\theta)m$. We need to prove that there exists $\phi_n\in\Phi(s_n,m_n,\theta_n)$ such that $\phi_n\to\phi$ in the weak* sense. Towards this end, pick $\phi_n=h(s_n,m_n,\theta_n)m_n$, , then by \cref{eq:Phi_smth}, $\phi_n\in\Phi(s_n,m_n,\theta_n)$. Thus, by the Dominated Convergence Theorem with \cref{eq:fh_bdd} again, we have
\begin{align*}
    \int f(s')\phi_n(s')\BBM Q(ds')=&\int f(s')h(s_n,m_n,\theta_n)m_n(s')\BBM Q(ds')\\
    \to&\int f(s')h(s,m,\theta)m(s')\BBM Q(ds')=\int f(s')\phi(s')\BBM Q(ds'),
\end{align*}
which yields $\phi_n\BBM Q\sw\phi\, \BBM Q$, and thus, $\phi_n\to\phi$ in the weak* sense. Then $\Phi$ is lower hemicontinuous at $(s,m,\theta)$. Since $\Phi$ is both upper/lower hemicontinuous, and $(s,m,\theta)$ is arbitrary, we can conclude that $\Phi$ is a continuous correspondence.

\section{Proof of Claims in Example \ref{exm:cvar_cts}}\label{pf:cvar_cts}
By \cref{eq:VaR}, we have $\text{VaR}_u(X_n)=F_{X_n}^{-1}(u)=\inf\{x:\pr{X_n\geq x}< 1-u\}$. Then by the Markov inequality, if $X_n\geq 0$ then
\begin{align*}
    \pr{X_n\geq x}= \pr{X_n^p\geq x^p}\leq \frac{\ex{X_n^p}}{x^p},\;\forall x>0.
\end{align*}
For any $u\in[0,1)$ and $p>1$, we see
\begin{align*}
    \{x:\pr{X_n\geq x}<1-u\}\supset \{x:\ex{X_n^p} < (1-u)x^p\}.
\end{align*}
Taking the infimum over $x$ on both sides, we have
\begin{align*}
    F_{X_n}^{-1}(u)\leq \inf\{x:\ex{X_n^p}<(1-u)x^p\}=\left(\frac{\ex{X_n^p}}{1-u}\right)^{\frac{1}{p}}.
\end{align*}
Pick
\begin{align}\label{eq:var_dom}
    \bar f_n(u)=\frac{\ex{X_n^p}^{\frac{1}{p}}}{(1-u)^{\frac{1}{p}}}\ind{u\in[\lambda(\theta_n),1]},
\end{align}
then $F_{X_n}^{-1}(u)\leq \bar f_n(u)$ for all $u\in[0,1)$ and $n\in\Na$. Moreover,
\begin{align}
    \lim_{n\to\infty}\int_0^1\bar f_n(u)du =& \lim_{n\to\infty}\ex{X_n^p}^{\frac{1}{p}}\int_0^1 \frac{\ind{u\in[\lambda(\theta_n),1]}}{(1-u)^{\frac{1}{p}}}du\nonumber\\
    =& \lim_{n\to\infty}\ex{X_n^p}^{\frac{1}{p}} \frac{p(1-\lambda(\theta_n))^{1-\frac{1}{p}}}{p-1} \nonumber\\
    =& \ex{X^p}^{\frac{1}{p}}\frac{p(1-\lambda(\theta))^{1-\frac{1}{p}}}{p-1}=\int_0^1 \bar f(u) du<\infty, \label{eq:bar_f_dct}
\end{align}
since $X_n\cp X$, and $\lambda:\Theta\to[0,1)$ is continuous and uniformly bounded. Furthermore, as we stated in \cref{exm:inf}, $X_n\cp X$ implies $F_{X_n}\to F_X$ and thus \cite[Proposition 5, p.250]{fristedt2013modern} implies that $F_{X_n}^{-1}\to F_X^{-1}$ pointwise\footnote{In \cite[Proposition 5, p.250]{fristedt2013modern}, the assumption of the convergence of probability measures is indeed the convergence of distribution functions. See \cite[Definition 1, p.244]{fristedt2013modern} for details.}. In this case, we have $\abs{F_{X_n}^{-1}(u)}\leq \bar f_n(u)+M$ for all $n\in\Na$, which yields
\begin{align*}
    &\int_{\lambda(\theta_n)}^1 \text{VaR}_u(X_n) du = \int_0^1 \ind{u\in[\lambda(\theta_n),1]} \text{VaR}_u(X_n) du\\
    &\qquad\stackrel{(a)}{\to} \int_0^1 \ind{u\in[\lambda(\theta),1]} \text{VaR}_u(X) du = \int_{\lambda(\theta)}^1 \text{VaR}_u(X) du,
\end{align*}
where (a) holds by the generalized Dominated Convergence Theorem. This establishes that $\text{CVaR}_{\lambda(\theta)}(X)$ is jointly continuous on $\CAL V\times\Theta$.


\section*{Acknowledgments}
Shiping Shao and Abhishek Gupta would like to acknowledge Ford Motor Company for supporting this research through a University Alliance Project.

\bibliographystyle{siamplain}
\bibliography{reference}

\end{document}